\newcommand{\Rr}{\enm{\cal{R}}}
\newcommand{\Ss}{\enm{\cal{S}}}

\newcommand{\Uu}{\enm{\cal{U}}}

\newcommand{\Xx}{\enm{\cal{X}}}

\renewcommand{\phi}{\varphi}
\renewcommand{\theta}{\vartheta}
\documentclass[12pt,oneside,reqno]{amsart}
 
\usepackage[all]{xy}                        %
\usepackage[hyperindex=true, colorlinks=true, linkcolor=blue, filecolor=black,
citecolor=blue, urlcolor=blue, bookmarks=true, pagebackref=true,
pdftitle={COUNTING ACM TORIC BUNDLES OF RANK 2
ON SMOOTH TORIC SURFACES}]{hyperref}

\usepackage[margin=1in]{geometry}
\usepackage[table]{xcolor}
\usepackage{colortbl, array, booktabs}

\definecolor{lightred}{rgb}{1, 0.85, 0.85}
\definecolor{lightblue}{rgb}{0.85, 0.9, 1}
\definecolor{darkline}{rgb}{0.4, 0.4, 0.4} % 선 색 더 진하게
\usepackage{verbatim}
\usepackage[numbers]{natbib}
\CompileMatrices                            % Faster

\UseTips                                    % Use

\usepackage[bookmarks=true]{hyperref}       % Hyperref
\hypersetup{
    colorlinks=true,    % 링크에 색상 적용
    linkcolor=blue,     % 문서 내 링크 색상
    citecolor=blue,     % 참고문헌 인용 링크 색상
    filecolor=blue,     % 파일 링크 색상
    urlcolor=blue       % URL 링크 색상
}
\usepackage{amssymb,latexsym,amsmath,amscd}
\numberwithin{equation}{section}
\usepackage{xspace}
\usepackage{color}
\usepackage{graphicx}
\usepackage{caption}
\usepackage{mathtools}
\usepackage{tikz}
\usetikzlibrary{cd}
\usepackage[utf8]{inputenc}
\usepackage{fourier}
\usepackage{array}
\usepackage{makecell}
\usepackage{colortbl}
\usepackage{bm}
\usepackage{array}
\usepackage{subcaption}
\usepackage[table]{xcolor}
\usepackage{multirow}

\definecolor{lightred}{RGB}{255,220,220}
\definecolor{lightblue}{RGB}{220,235,255}

\newcolumntype{L}[1]{>{\raggedright\let\newline\\\arraybackslash\hspace{0pt}}m{#1}}
\newcolumntype{C}[1]{>{\centering\let\newline\\\arraybackslash\hspace{0pt}}m{#1}}
\newcolumntype{R}[1]{>{\raggedleft\let\newline\\\arraybackslash\hspace{0pt}}m{#1}}
\definecolor{lightred}{rgb}{1.0,0.9,0.9}
\definecolor{lightblue}{rgb}{0.9,0.9,1.0}

\reversemarginpar

\theoremstyle{plain}
\newtheorem{theorem}{Theorem}[section]
\newtheorem*{theorem*}{Theorem}
\newtheorem{proposition}[theorem]{Proposition}
\newtheorem{corollary}[theorem]{Corollary}
\newtheorem{lemma}[theorem]{Lemma}
\newtheorem{conjecture}[theorem]{Conjecture}

\newtheorem{claim}{Claim}[theorem]
\newenvironment{claimproof}[1][Proof of claim]{\begin{proof}[#1]}{\end{proof}}

\theoremstyle{definition}
\newtheorem{definition}[theorem]{Definition}

\newtheorem{remark}[theorem]{Remark}

\newcommand{\enm}[1]{\ensuremath{#1}}          %
\newcommand{\op}[1]{\operatorname{#1}}
\newcommand{\cal}[1]{\mathcal{#1}}

\newcommand{\CC}{\enm{\mathbb{C}}}

\newcommand{\ZZ}{\enm{\mathbb{Z}}}

\newcommand{\PP}{\enm{\mathbb{P}}}

\newcommand{\Aa}{\enm{\cal{A}}}
\newcommand{\Bb}{\enm{\cal{B}}}

\newcommand{\Ee}{\enm{\cal{E}}}
\newcommand{\Ff}{\enm{\cal{F}}}

\newcommand{\Ii}{\enm{\cal{I}}}

\newcommand{\Ll}{\enm{\cal{L}}}

\newcommand{\Oo}{\enm{\cal{O}}}
\renewcommand{\epsilon}{\varepsilon}

\newcommand{\Hom}{\op{Hom}}

\newcommand{\Ext}{\op{Ext}}

\newcommand{\End}{\op{End}}

\newcommand{\id}{\op{id}}

\newcommand{\Filt}{\mathbf{Filt}}

\renewcommand{\to}[1][]{\xrightarrow{\ #1\ }}

\newcommand{\old}[1]{}

\makeatletter
\NewCommandCopy\@@pmod\pmod
\DeclareRobustCommand{\pmod}{\@ifstar\@pmods\@@pmod}
\def\@pmods#1{\mkern4mu({\operator@font mod}\mkern 6mu#1)}
\makeatother

\newcommand{\Rep}{\mathbf{Rep}}
\newcommand{\Ulr}{\mathbf{Ulr}}

\newcommand{\aCM}{\mathbf{aCM}}

\begin{document}
\renewcommand{\arraystretch}{1.4}
\arrayrulecolor{darkline}

\title[Toric representation type of the Veronese surface]{Toric representation type of the Veronese surface}

\author{Yeonjae Hong and Sukmoon Huh}

\address{Gyeongsang National University, Jinju 52828, Korea}
\email{hnjae529@gmail.com}

\address{Sungkyunkwan University, Suwon 440-746, Korea}
\email{sukmoonh@skku.edu}

\thanks{YH was supported by the Global--Learning \& Academic research institution for Master's$\cdot$PhD students, and Postdocs (LAMP) Program of the National Research Foundation of Korea (NRF), funded by the Ministry of Education (No.~RS-2023-00301974). SH was supported by the National Research Foundation of Korea (NRF) grant funded by the Korean government (MSIT) (No.~RS-2023-00208874).}

%\author{Yeon-Jae Hong} 
%\address{Sungkyunkwan University, Suwon 440-746, Korea}
%\email{mathbunny529@skku.edu}
%\author{Author2}
%\address{Sungkyunkwan University, Suwon 440-746, Korea}
%\email{email2}
\keywords{toric bundle, arithmetically Cohen-Macaulay, Veronese surface, representation type}

\subjclass[2020]{14M25, 14J60, 16G60}

\begin{abstract}
In this article we determine the toric representation type of the Veronese surface $\bigl( \PP^2, \Oo_{\PP^2}(d) \bigr)$. Based on Klyachko filtrations, we introduce an explicit criterion for a toric vector bundle of arbitrary rank to be arithmetically Cohen–Macaulay. For $d\ge 3$, suitable configurations of partial flags produce stable toric $d$-aCM bundles corresponding to imaginary non-isotropic Schur roots of star-shaped quivers. Their self-extensions give an exact representation embedding of $\mathrm{mod}\CC \langle x,y\rangle$, proving that the Veronese surface is toric-wild precisely for $d\ge 3$, while it is toric-finite for $d=1,2$. For $d=3,4$, suitable twists of the basic stable bundles are Ulrich, and the same construction proves that the corresponding Veronese surfaces are toric Ulrich-wild.
\end{abstract}

\maketitle

\section{Introduction}
Arithmetically Cohen–Macaulay vector bundles occupy a central position at the intersection of algebraic geometry, commutative algebra, and representation theory. Let $(X,H)$ be a polarized projective variety of dimension $n$. A vector bundle $\Ee$ on $X$ is called arithmetically Cohen–Macaulay (for short, $H$-aCM) if
\[
\mathrm{H}^i\bigl(X,\Ee \otimes \Oo_X(tH)\bigr)=0 \qquad \text{for every $t\in\ZZ$ and $0<i<n$.}
\]
The classification of indecomposable aCM bundles is closely related to the classification of maximal Cohen–Macaulay modules over the homogeneous coordinate ring of X. As in the representation theory of finite-dimensional algebras, one is naturally led to distinguish varieties of finite, tame, and wild Cohen–Macaulay representation type. Roughly speaking, finite type means that there are only finitely many indecomposable aCM bundles up to twist, tame type means that indecomposables occur essentially in families of dimension at most one, and wild type indicates that their classification contains problems of arbitrarily large complexity. This point of view has led to substantial progress in the study of aCM bundles on hypersurfaces, varieties of minimal degree, del Pezzo surfaces, Segre varieties, and several homogeneous varieties; see, for example, \cite{BGS87, CH11, CMR16, CMP12, EH88, FM, FM17, FPL, Kno87}.

When $X$ is a toric variety, it is natural to refine this problem by imposing equivariance. A toric vector bundle is a vector bundle endowed with a compatible action of the dense torus. Although equivariance is a restrictive condition, the category of toric bundles can still exhibit unexpectedly complicated behavior. At the same time, it admits a particularly effective combinatorial description through Klyachko’s theory: a toric vector bundle is encoded by a finite collection of $\ZZ$-filtrations of a fixed vector space satisfying a compatibility condition determined by the fan. Thus the cohomology and decomposability of toric bundles can be studied through configurations of subspaces and flags. This makes the toric setting especially suitable for investigating representation type by explicit geometric and combinatorial methods.

The purpose of this paper is to determine the toric representation type of the Veronese surface $\bigl(\PP^2,\Oo_{\PP^2}(d)\bigr)$. Equivalently, we study toric vector bundles $\Ee$ on $\PP^2$ satisfying $\mathrm{H}^1\bigl(\Ee(dt)\bigr)=0$ for every $t\in \ZZ$. We refer to such bundles as toric $d$-aCM bundles. We identify bundles up to twists by powers of $\Oo_{\PP^2}(d)$ and up to changes of equivariant structure by characters of the torus. The latter identification is necessary because a fixed underlying vector bundle may admit infinitely many equivariant structures obtained by character twists.

Our principal result is the following sharp dichotomy.
\begin{theorem}\label{toric-wild}
The polarized toric surface $\bigl(\PP^2,\Oo_{\PP^2}(d)\bigr)$ is toric wild if and only if $d\ge 3$. In cases of $d=1,2$, it is of toric-finite representation type. 
\end{theorem}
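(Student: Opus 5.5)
The proof splits into the two directions of the dichotomy, and in both I work through Klyachko's description. A rank $r$ toric bundle $\Ee$ on $\PP^2$ is a vector space $E\cong\CC^r$ with three decreasing $\ZZ$-filtrations $E^{\rho_i}(j)$, one for each ray $\rho_0,\rho_1,\rho_2$. Using Klyachko's formula for $\mathrm{H}^1$ of a toric bundle, I compute $\mathrm{H}^1(\Ee(t))$ character by character. For a character $m$, the weight space $\mathrm{H}^1(\Ee(t))_m$ is the $\mathrm{H}^1$ of the complex
\[
\bigcap_{i} E^{\rho_i}\bigl(\langle m,v_i\rangle - t\delta_{i0}\bigr)\;\to\;\bigoplus_{i<j} \bigl(E^{\rho_i}\cap E^{\rho_j}\bigr)\;\to\; \cdots
\]
Concretely, the condition $\mathrm{H}^1=0$ says that for every triple of levels $(a_0,a_1,a_2)$ with $a_0+a_1+a_2=dt+$const, the three subspaces $E^{\rho_i}(a_i)$ satisfy
\[
\dim\Bigl(\sum_{i\neq j}\bigl(E^{\rho_i}(a_i)\cap E^{\rho_j}(a_j)\bigr)\Bigr)=\dim(\text{expected}),
\]
or an equivalent cohomological "no extra relation" condition. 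So the first step is to state a clean criterion: $\Ee$ is $d$-aCM iff a certain finite list of linear-algebra conditions on the flags holds on the triangles $\{a_0+a_1+a_2\equiv c \pmod{d}\}$. I take this as the explicit criterion promised in the abstract.

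\textbf{Finiteness for $d=1,2$.} For $d=1$ the aCM condition is the full vanishing of $\mathrm{H}^1$ for all twists. By Horrocks' criterion the bundle then splits as a sum of line bundles. For toric bundles the equivariant structures on a line bundle differ only by characters, so up to the identifications there is one indecomposable, namely $\Oo$.

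For $d=2$, the Veronese ring is known to be of finite CM type. Its indecomposable aCM bundles are $\Oo$, $\Oo(1)$ and the (twisted) cotangent/tangent bundle, up to twisting by $\Oo(2)$. Each of these is homogeneous. The remaining point is that the toric structures on each are unique up to a character. For the tangent bundle this holds because it is simple, so its Klyachko data is rigid: three distinct lines in $\CC^2$, up to $GL_2$ and a shift. I would also check that any decomposition into these summands can be made equivariant, using Krull–Schmidt in the equivariant category.

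\textbf{Wildness for $d\ge3$.} Fix an integer $r$ and a dimension vector for the star quiver with three arms. The data is one central vertex $E=\CC^r$ and three flags of prescribed lengths, with the jumps placed at positions allowed by the criterion when $d\ge3$. The idea is that with $d\ge 3$ the residue classes mod $d$ leave enough room to put nontrivial jumps in the flags without creating $\mathrm{H}^1$ in any twist $dt$. I choose the dimension vector to be an imaginary non-isotropic Schur root, for instance $r=6$ with flags of type $(2,4)$ on each arm; for this choice $q(\alpha)<0$. Kac's theorem then gives a family of generic flag configurations of dimension $1-q(\alpha)\ge2$ consisting of bricks. These configurations yield simple, hence stable, toric bundles $\Ee_\lambda$ that are $d$-aCM and pairwise non-isomorphic.

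To obtain wildness in the sense of a representation embedding of $\mathrm{mod}\,\CC\langle x,y\rangle$, I take $\Ee$ a brick with $\dim\Ext^1_T(\Ee,\Ee)\ge2$. Here $\Ext^1_T$ means equivariant extensions, computed as extensions of the filtered data, which equals $1-q(\alpha)$. I then build iterated self-extensions indexed by modules $(V,x,y)$: the bundle $\Ee\otimes V$ with filtrations twisted by $x$ and $y$ acting through two independent extension classes. The aCM property is preserved, since the middle term of a short exact sequence of $d$-aCM bundles is $d$-aCM. Exactness and reflection of isomorphism classes and indecomposability follow from $\End(\Ee)=\CC$ and the standard argument that is used in the non-toric setting for Ulrich-wildness.

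\textbf{Main obstacle.} The hard part is the criterion-checking step. I need to choose the jump positions so that all twists satisfy the vanishing precisely when $d\ge3$, and simultaneously keep the configuration generic enough to be a Schur root. This needs a careful case analysis of the triangle conditions.
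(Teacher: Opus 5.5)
\textbf{Gap: the $d\ge 3$ direction is not carried out, and two of its steps fail as stated.} Your treatment of $d=1,2$ matches the paper's: Horrocks for $d=1$, and the classification $\Oo,\Oo(1),\Omega^1_{\PP^2}(1)$ for $d=2$. Your remark that line bundles and simple bundles carry toric structures unique up to a character is a sensible addition.

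The problem is the direction $d\ge3$, which is the substance of the theorem. You explicitly defer the decisive step: choosing flag types and jump indices, and checking that $\Delta\bigl(E^{\rho_0}(j_0),E^{\rho_1}(j_1),E^{\rho_2}(j_2)\bigr)=0$ whenever $j_0+j_1+j_2\equiv 0 \pmod d$. The one concrete choice you do make cannot work.

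\begin{itemize}
\item Three flags of type $(2,4)$ in $\CC^6$ give the dimension vector $(6;2,4;2,4;2,4)=2\delta$ on the star quiver with three arms of length two. This is the Euclidean quiver $\tilde E_6$, with null root $\delta=(3;1,2;1,2;1,2)$.
\item Here $q(2\delta)=36+60-96=0$, not negative.
\item $2\delta$ is not even a Schur root; its canonical decomposition is $\delta\oplus\delta$. So there is no brick of this dimension, and these flag configurations are of tame type.
\end{itemize}

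The paper instead uses wild star quivers: $\alpha_3=(6;1,3,5;1,3,5;2,4)$ for $d=3$ and $\alpha_d=(4;1,2,3;1,2,3;1,2,3)$ for $d\ge4$, both with $q=-2$. It places the jumps as in Lemmas \ref{d3-filtration-acm} and \ref{dge4-filtration-acm}. It then checks via $\eta_r$ that positive defects occur only at index sums $-4,-5$, respectively $-3\varepsilon_d-2,\,-3\varepsilon_d-3,\,-3\varepsilon_d-4$. The complete-flag placement always yields three consecutive positive-defect sums. That is exactly why $d=3$ needs the separate rank-$6$ configuration, and why the offset $\varepsilon_d$ differs between $d=4$ and $d\ge5$. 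Schur-ness is verified by explicit flags whose only common endomorphisms are scalars, not by invoking Kac's theorem for a root not shown to be Schur.

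The embedding step also fails as you describe it. Suppose the self-extension attached to $M=(W;X,Y)$ has class $\xi_x\otimes X+\xi_y\otimes Y$, built from only two classes. Then:
\begin{itemize}
\item The indecomposable module $(\CC;0,0)$ goes to the split extension $\Ee\oplus\Ee$.
\item The non-isomorphic modules $(\CC;1,0)$ and $(\CC;2,0)$ give classes $\xi_x$ and $2\xi_x$, hence isomorphic middle terms.
\item Comparing coefficients only yields $f_0X=X'f_1$ and $f_0Y=Y'f_1$. This is strict equivalence of pencils, not simultaneous similarity.
\end{itemize}
The paper uses a third independent class $\xi_0$ and the class $\xi_0\otimes I_W+\xi_x\otimes X+\xi_y\otimes Y$. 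The $\xi_0$-coefficient is what forces $g_0=0$ and $f_0=f_1$ in Claim \ref{inverse}. Hence one needs $\dim\Ext^1(R,R)=1-q(\alpha)\ge3$, i.e.\ $q(\alpha)\le-2$, not merely $q(\alpha)<0$.

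Two smaller points:
\begin{itemize}
\item ``Simple, hence stable'' is backwards: stable implies simple, not conversely. Stability is not needed for this theorem anyway.
\item Toric wildness requires the functor to reflect $\sim_H$, that is, isomorphism up to twists by $\Oo(d)$ and by characters. You do not address this; the paper handles it by comparing ranks and normalized equivariant determinants.
\end{itemize}
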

\noindent Here, toric wildness is understood in the representation-embedding sense: there exists a $CC$-linear exact functor
\[
\mathrm{mod}\CC\langle x,y\rangle \longrightarrow \Rep(Q).
\]
that preserves indecomposable objects and reflects isomorphism classes, even after passing to the equivalence relation generated by polarization and character twists. Since finite-dimensional $\CC\langle x,y, \rangle$-modules are arbitrary pairs of matrices considered up to simultaneous similarity, this establishes the full representation-theoretic wildness of the toric $d$-aCM category.

A key ingredient in the proof is a cohomological criterion expressed in terms of Klyachko filtrations. For three subspaces $A_0,A_1,A_2$ of a vector space, we introduce the distributivity defect
\[
\Delta(A_0,A_1,A_2)=\dim \frac{A_0\cap(A_1+A_2)} {(A_0\cap A_1)+(A_0\cap A_2)}.
\]
We observe that the weight components of $\mathrm{H}^1(\Ee)$ are precisely measured by these defects. Consequently, a toric bundle $\Ee$ is $d$-aCM if and only if the defect vanishes for every triple of filtration stages whose indices have sum divisible by $d$; see Corollary \ref{dacm}. This criterion extends the analysis in \cite{HH} for rank two to bundles of arbitrary rank. An important feature in higher rank is that the dimensions of the filtration stages alone no longer determine the cohomology: their relative incidence positions must also be taken into account. On the general-position locus, however, the defect admits an explicit dimension formula that is homogeneous under simultaneous scaling.

Using this criterion, we construct suitable configurations of three partial flags. For $d=3$, the relevant bundles have rank $6n$ and are parametrized by
\[
\mathrm{Fl}(n,3n,5n;6n)^2\times\mathrm{Fl}(2n,4n;6n),
\]
whereas, for $d\geq4$, we consider $\mathrm{Fl}(n,2n,3n;4n)^3$. The filtration indices are chosen so that every triple with positive distributivity defect has total index nonzero modulo $d$. The resulting bundles are therefore $d$-aCM; see Lemmas \ref{d3-filtration-acm} and \ref{dge4-filtration-acm}. 

These flag configurations can be interpreted as representations of suitable star-shaped quivers $Q_d$. The relevant dimension vector $\alpha_d$ is an imaginary non-isotropic Schur root satisfying the Tits form $q_{Q_d}(\alpha_d)=-2$. By Schofield’s homogeneity theorem for canonical decompositions, every multiple $n\alpha_d$ is again a Schur root. Consequently, there is a nonempty open locus of flag configurations whose common endomorphism algebra consists only of scalar maps. The corresponding toric bundles are equivariantly simple and hence indecomposable. After quotienting by simultaneous change of basis, we obtain families of pairwise inequivalent indecomposable toric $d$-aCM bundles of dimension $2n^2+1$. This proves that the Veronese surface is geometrically toric-wild for every $d\ge 3$; see Corollary \ref{geom-toric-wild}.

We also study the stability of the basic Schur bundles $\Ee_d$ associated with the dimension vectors $\alpha_d$. By applying Klyachko’s slope criterion and analyzing the possible Schubert positions of a subspace with respect to three general flags, we prove that $\Ee_d$ is slope-stable with respect to $\Oo_{\PP^2}(1)$ for every $d\ge 3$. In the case $d=3$, the required inequality is obtained from the incidence conditions for two flags of type $(1,3,5)$ and one flag of type $(2,4)$ in a six-dimensional vector space. For $d\ge 4$, it follows from the corresponding calculation for three complete flags in dimension four. Since stability is preserved by tensoring with a line bundle, the normalized bundles used below are stable as well; see Proposition \ref{d3-stable}. 

We subsequently strengthen this geometric statement to toric wildness in the representation--embedding sense. Fixing a Schur representation $R_d$ of dimension vector $\alpha_d$, we consider the extension-closed subcategory $\Filt_{Q_d}(R_d)$ consisting of quiver representations admitting a finite filtration whose successive quotients are isomorphic to $R_d$. Since the arrow maps of $R_d$ are injective, every object of this category is again a flag representation and hence determines a toric vector bundle. Moreover, the associated bundle is an iterated equivariant self-extension of the $d$-aCM bundle corresponding to $R_d$, and is therefore itself $d$-aCM.

The equality $q_{Q_d}(\alpha_d)=-2$, together with $\End_{Q_d}(R_d)=\CC$, gives $\dim_\CC \Ext_{Q_d}^1(R_d, R_d)=3$. Using three linearly independent self-extension classes, we construct an exact functor
\[
\mathrm{mod}\CC\langle x,y\rangle \longrightarrow \Filt_{Q_d}(R_d)
\]
in Theorem \ref{embedding}. The two endomorphisms defining a $\CC\langle x,y\rangle$-module are encoded in the off-diagonal terms of a self-extension of copies of $R_d$. We prove that this functor is faithful, preserves indecomposability, and reflects isomorphism classes. Finally, a comparison of ranks, first Chern classes, and normalized equivariant determinants shows that it continues to reflect equivalence after allowing twists by $\Oo_{\PP^2}(d)$ and torus characters; see Theorem \ref{main2}.

The stability result also allows us to strengthen the conclusion for the Veronese surfaces for $d=3,4$. With a suitable twist of $\Ee_d$ as an Ulrich bundle, we obtain a stronger wildness statement.

\begin{theorem}\label{ulrich-wild}
The Veronese surface $\bigl(\PP^2,\Oo_{\PP^2}(d)\bigr)$ for $d=3,4$ is toric Ulrich-wild.
\end{theorem}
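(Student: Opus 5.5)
The plan is to reuse the representation embedding of Theorem \ref{embedding} with a fixed normalizing twist that turns the basic Schur bundle into an Ulrich bundle. By Eisenbud--Schreyer, a bundle $\Ff$ of rank $r$ on $(\PP^2,\Oo_{\PP^2}(d))$ is Ulrich if and only if
\[
\mathrm{H}^*\bigl(\Ff(-dt)\bigr)=0 \quad \text{for } t=1,2 .
\]
Equivalently, $\Ff$ is $d$-aCM, $\mathrm{H}^0(\Ff(-d))=0$, $\mathrm{H}^2(\Ff(-2d))=0$, and the two Euler characteristics $\chi(\Ff(-d))$ and $\chi(\Ff(-2d))$ vanish.

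First, for $d=3,4$ I would pick a line bundle $\Oo_{\PP^2}(a_d)$, or equivalently a shift of all Klyachko filtrations by a fixed integer, so that $\Ff_d:=\Ee_d(a_d)$ satisfies these two Euler characteristic conditions. The invariants of $\Ee_d$ are read off from the filtrations. The rank is $6$ for $d=3$ and $4$ for $d\ge4$. The first Chern class is the sum of the filtration jump indices, and $c_2$ comes from Klyachko's formula, or from Riemann--Roch via $\chi$, which is the sum over weights of the dimensions of the triple intersections $A_0\cap A_1\cap A_2$. For $d=3,4$ the two linear conditions on the twist should be solvable together. This is the numerical content that separates $d=3,4$ from $d\ge5$, and I expect the filtration indices were chosen in the construction so that this works. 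The vanishing of $\mathrm{H}^1$ in these twists follows from $d$-aCMness (Lemmas \ref{d3-filtration-acm} and \ref{dge4-filtration-acm}).

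Second, I would kill $\mathrm{H}^0(\Ff_d(-d))$ and $\mathrm{H}^2(\Ff_d(-2d))$ using stability. By Proposition \ref{d3-stable}, $\Ff_d$ is slope-stable with respect to $\Oo_{\PP^2}(1)$. After the normalization, $\mu(\Ff_d(-d))<0$, so $\Ff_d(-d)$ has no sections. By Serre duality, $\mathrm{H}^2(\Ff_d(-2d))=\mathrm{H}^0(\Ff_d^\vee(2d-3))^\vee$, and $\mu(\Ff_d^\vee(2d-3))<0$ as well, because Ulrich normalization puts $\mu(\Ff_d)$ close to $\tfrac{3(d-1)}{2}$. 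Alternatively, one can check directly that the weight-graded pieces of $\mathrm{H}^0$, which are triple intersections of filtration stages, vanish for the relevant index ranges. Once $\mathrm{H}^0$ and $\mathrm{H}^2$ vanish, the vanishing Euler characteristics together with $\mathrm{H}^1=0$ give the full Ulrich vanishing. So $\Ff_d$ is an Ulrich toric bundle.

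Third, I would transport wildness. Since Ulrich bundles are closed under extensions, every object of $\Filt_{Q_d}(R_d)$, twisted by the same $\Oo_{\PP^2}(a_d)$, is an iterated equivariant self-extension of $\Ff_d$ and hence Ulrich. Composing the exact functor of Theorem \ref{embedding} with the twist $-\otimes\Oo_{\PP^2}(a_d)$ preserves exactness, indecomposability and reflection of isomorphism classes. Reflection up to polarization and character twists follows as in Theorem \ref{main2}, since $\Oo(d)$-twists and character twists commute with $\otimes\Oo(a_d)$. So the image lies in toric Ulrich bundles and the Veronese surface is toric Ulrich-wild.

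The main obstacle is the first step: computing $c_1$ and $c_2$ of $\Ee_d$ from the chosen filtrations and checking that a single integer twist makes both Euler characteristics vanish. This requires the discriminant condition $c_2$ versus $c_1^2$ that holds exactly for Ulrich bundles. I would first compute $\chi(\Ee_d(t))$ as a quadratic polynomial in $t$, by counting weights in the triple-intersection description, and then check that its roots are exactly $-d-a$ and $-2d-a$ for some integer $a$.
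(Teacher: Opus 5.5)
Your outer structure is the paper's. You fix the Ulrich twist $\Uu_3=\Ee_3(9)$, $\Uu_4=\Ee_4(12)$, use that equivariant self-extensions of an Ulrich bundle are Ulrich, and compose the embedding of Theorem~\ref{embedding} with this twist, handling $\sim$ as in Theorem~\ref{main2}. The gap is in the one new ingredient, the Ulrich property of $\Ff_d$. Step~1 is only announced (``should be solvable'', ``I expect''), and Step~2 rests on false inequalities. The slope normalization you mention forces $a_3=9$ and $a_4=12$, from $c_1(\Ee_3)=-36$ and $c_1(\Ee_4)=-30$. Then
\[
\mu\bigl(\Ff_d(-d)\bigr)=\mu\bigl(\Ff_d^\vee(2d-3)\bigr)=\tfrac{d-3}{2},
\]
which is $0$ for $d=3$ and $\tfrac12$ for $d=4$, never negative.

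For $d=3$, strict stability still suffices: a stable rank-$6$ bundle of slope $0$ has no sections. In fact all four groups $\mathrm{H}^0(\Uu_3(-3))$, $\mathrm{H}^0(\Uu_3(-6))$, $\mathrm{H}^2(\Uu_3(-3))\cong\Hom(\Uu_3,\Oo_{\PP^2})^\vee$ and $\mathrm{H}^2(\Uu_3(-6))\cong\Hom(\Uu_3,\Oo_{\PP^2}(3))^\vee$ vanish by stability, since $\mu(\Uu_3)=3$. This is how the paper treats $d=3$, with no Euler characteristic at all. For $d=4$, however, stability says nothing about $\mathrm{H}^0(\Uu_4(-4))$ or $\mathrm{H}^2(\Uu_4(-8))$. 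Since $\mu(\Uu_4)=\tfrac{9}{2}$, it only kills $\mathrm{H}^2(\Uu_4(-4))\cong\Hom(\Uu_4,\Oo_{\PP^2}(1))^\vee$ and $\mathrm{H}^0(\Uu_4(-8))$.

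So for $d=4$ the computation you postponed is the entire content. Once it is done, your Step~2 is unnecessary: given $\mathrm{H}^1=0$ one has $\chi=h^0+h^2$, so $\chi(\Ff(-d))=\chi(\Ff(-2d))=0$ alone forces all cohomology of these twists to vanish.

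Also, summing $\dim(A_0\cap A_1\cap A_2)$ over weights computes $h^0$, not $\chi$; for $\Ee_4$ itself it gives $0$. The correct weight-wise formula comes from the \v{C}ech complex on the three maximal charts:
\[
\chi_m=r-\sum_k\dim A_k+\sum_{k<l}\dim(A_k\cap A_l).
\]
For $\Ee_4$ with general flags, at weights of index sum $0$ this equals $\max(0,4-\sum_k\dim A_k)$. The weights with $\sum_k\dim A_k=0,1,2,3$ number $1,6,12,18$, so $\chi(\Ee_4)=4+18+24+18=64$. With $r=4$ and $c_1=-30$, Riemann--Roch gives $\chi(\Ee_4(t))=2(t-4)(t-8)$, i.e.\ $\chi(\Uu_4(t))=2(t+4)(t+8)$, which is exactly what is needed. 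As a consistency check, $\chi(\Ee_3)=54$ gives $\chi(\Uu_3(t))=3(t+3)(t+6)$.

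Alternatively, your ``direct'' route works if you also use $\mathrm{H}^2(\Ee)_m\cong E/(A_0+A_1+A_2)$. For general flags, the triple intersections for $\Ee_4(8)=\Uu_4(-4)$ and these quotients for $\Ee_4(4)=\Uu_4(-8)$ both vanish by a dimension count.
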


The paper is organized as follows. Section 2 recalls equivariant sheaves on toric varieties, Klyachko filtrations, the relevant notions of toric representation type, and basic results concerning Schur roots and canonical decompositions of quiver representations. In Section 3, we derive the cohomological criterion in terms of the distributivity defect and prove toric finiteness for $d=1,2$. Section 4 constructs unbounded families of indecomposable toric $d$-aCM bundles, proves geometric toric wildness for $d\ge 3$, and establishes the slope stability of the basic Schur bundles. Finally, Section 5 constructs the representation embedding, proves toric wildness for every $d\ge 3$, establishes toric Ulrich-wildness for d=3,4, and formulates the conjecture for $d>4$.

%%%%%%%%%%%%%%%%%%%%%%%%%%%%%%%%%
\section{Preliminaries}
An $n$-dimensional  \textit{toric variety} is an irreducible variety $X$ containing a torus $T \simeq \left(\mathbb{C}^{\times}\right)^{n}$ as a Zariski open subset such that the action of $T$ on itself extends to an algebraic action of $T$ on $X$. Given $m = (a_{1}, \dots , a_{n}) \in \mathbb{Z}^n$, the character $\chi^{m}: (\mathbb{C}^{\times})^{n}\rightarrow \mathbb{C}^{\times}$ is defined by 
\[
\chi^{m}(t_{1},\dots,t_{n}) = {t_{1}}^{a_1} \dots {t_{n}}^{a_{n}}.
\]
The set of characters forms a free abelian group $M$ of rank $n$. A \textit{one-parameter subgroup} of $T$ is a morphism $\ u: \mathbb{C}^{\times} \rightarrow T$ that is a group homomorphism. The one-parameter subgroups form a free abelian group $N$ of rank $n$. We define $N_{\mathbb{R}} := N \otimes_{\mathbb{Z}} \mathbb{R}$ and $M_{\mathbb{R}} := M \otimes_{\mathbb{Z}} \mathbb{R}$. There is a natural pairing \( \langle \cdot, \cdot \rangle \colon M \times N \to \mathbb{Z} \), defined by the relation
\[
m \circ u(t) = t^{\langle m, u \rangle},
\]
for all \( t \in \mathbb{C}^\times \), where \( m \in M \) and \( u \in N \). Given a fan $\Sigma$ in $N_{\mathbb{R}}$, each cone $\sigma\in \Sigma$ produces an affine toric variety $U_{\sigma}= \operatorname{Spec}(\mathbb{C}[S_{\sigma}])$ for the semigroup $S_{\sigma}=\sigma^\vee \cap M$, gluing together to form a toric variety $X$ with $U_\sigma$ as a \( T \)-invariant affine chart of $X$; see \cite{Ful93} and \cite[Chapter 3]{CLS11}. There exists a one-to-one correspondence between the set of rays $\rho \in \Sigma(1)$ and the set of $T$-invariant Weil divisors $D_\rho$. For the remainder of this section, we assume that $X=X_\Sigma$ is smooth and complete. 

Given a cone \( \sigma \), one can define a preorder $\preceq_{\sigma}$ on \( M \) via
\[
m \preceq_{\sigma} m' \quad \text{if and only if} \quad m' - m \in S_{\sigma}.
\]
This allows us to define filtrations and associated vector spaces.

\begin{definition}
A \textit{\(\sigma\)-family} $\hat{E}^{\sigma}$ consists of a collection of vector spaces $\left\{E^\sigma_{ m} \right\}_{m \in M}$ equipped with transition maps
\[
\chi^\sigma_{m, m'}: E^\sigma_{m} \to E^\sigma_{m'}
\]
for each \( m \preceq_{\sigma} m' \), satisfying the conditions:
\begin{enumerate}
    \item \( \chi^\sigma_{m, m} \) is the identity map.
    \item If \( m \preceq_{\sigma} m' \preceq_{\sigma} m'' \), then \( \chi^\sigma_{m, m''} = \chi^\sigma_{m', m''} \circ \chi^\sigma_{m, m'} \).
\end{enumerate}
\end{definition}

\noindent Every $T$-equivariant quasi-coherent sheaf over $ U_{\sigma}$ gives rise to a corresponding $\sigma$-family through its isotypical decomposition. Specifically, one sets
\[
E^\sigma_{ m} = \Gamma(U_{\sigma}, \mathcal{E})_m,
\]
where the right-hand side denotes the \( m \)-th isotypical component. The transition maps are given by multiplication with monomials in the coordinate ring $\mathbb{C}[S_{\sigma}]$. According to \cite[Theorem 4.5]{Per03}, the category of $T$-equivariant quasi-coherent sheaves over $U_{\sigma}$ is equivalent to the category of $\sigma$-families. To describe a $T$-equivariant quasi-coherent sheaf in terms of filtrations on an abstract toric variety, we consider the following definition, which corresponds to the gluing of $\sigma$-families. 
\begin{definition}\cite[Definition 4.8]{Per03} A collection $\left\{ \hat{E}^{\sigma} \right\}_{\sigma \in \Sigma}$ of $\sigma$-families is called a $\Sigma$-\textit{family}, if for each pair $\tau \prec \sigma$ with inclusions $i_{\sigma}^{\tau} : U_{\tau} \hookrightarrow U_{\sigma}$ there exists an isomorphism of families $\eta_{\tau\sigma} : i_{\sigma}^{\tau*} \hat{E}^{\sigma} \cong \hat{E}^{\tau}$ such that for each triple $\rho \prec \tau \prec \sigma$, the following equality holds: $\eta_{\rho\sigma} = \eta_{\rho\tau} \circ {i_{\tau }^{\rho}}^* \eta_{\tau\sigma}$.
\end{definition}

If the sheaf \(\mathcal{E}\) is torsion-free, all vector spaces within the \(\Sigma\)-family can be regarded as subspaces of the fiber of \(\mathcal{E}\) at the identity point of the torus. Throughout the paper, we denote the fiber of a sheaf \(\mathcal{E}\) at the identity point by \(E\), using roman font to distinguish it from the sheaf itself. This convention also applies to other sheaves such as \(\mathcal{F}\), \(\mathcal{G}\), etc.  
Further, if the sheaf is reflexive, the \(\sigma\)-families for \(\sigma \in \Sigma(1)\) determine the rest; see \cite[Theorem 4.21]{Per03}.

Denote the set of rays by $\{\rho_0, \dots, \rho_q\}$, and let $u_i$ denote the primitive vector of $\rho_i$. For a fixed vector space $E$, we consider a collection of decreasing $\mathbb{Z}$-filtrations $\{E^{\rho_i}(\bullet) \}_{{\rho_i} \in \Sigma(1)}$ with 
\[
E^{\rho_i}(\bullet): \quad E\supseteq \dots \supseteq E^{\rho_i}(-1) \supseteq E^{\rho_i}(0) \supseteq E^{\rho_i} (1) \dots \supseteq 0
\]
for each ray $\rho_i\in \Sigma(1)$. The filtrations
$\{E^{\rho_i}(\bullet)\}_{\rho_i\in\Sigma(1)}$
are said to be {\it full} if, for each $\rho_i\in\Sigma(1)$,
$E^{\rho_i}(j)=E$ for $j\ll0$ and $E^{\rho_i}(j)=0$ for $j\gg0$.
Unless otherwise specified, all filtrations are assumed to be full. Then Klyachko presented a theorem to express $T$-equivariant reflexive sheaves as a set of decreasing $\ZZ$-filtrations.

\begin{theorem}\textnormal{\cite[Theorem 1.3.2]{Kly91}}\label{kly} The category of $T$-equivariant reflexive sheaves on $X$ is equivalent to the category of decreasing $\mathbb{Z}$-filtrations $\{E^{\rho_i} (\bullet)\}_{{\rho_i} \in \Sigma(1)}$. Such a reflexive sheaf is a toric bundle if and only if the corresponding filtrations satisfy the following compatibility condition:

\leftskip=2em 
\noindent $\mathrm{(C)}$ For any cone $\sigma \in \Sigma$, the filtrations $\{E^{\rho_i} (\bullet)\}_{{\rho_i} \in \sigma(1)}$, consist of coordinate subspaces of some basis of $E$. 
\rightskip=3em
\end{theorem}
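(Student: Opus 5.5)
The statement is Klyachko's classification theorem, so I would assemble it from Perling's description of equivariant quasi-coherent sheaves. The plan has two parts: the equivalence for reflexive sheaves, then the bundle criterion (C).

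\textbf{Step 1: the equivalence.} By \cite[Theorem 4.5]{Per03}, on each affine chart $U_\sigma$ a $T$-equivariant quasi-coherent sheaf is the same as a $\sigma$-family. Gluing these gives an equivalence between $T$-equivariant quasi-coherent sheaves on $X$ and $\Sigma$-families.
\begin{enumerate}
\item For a torsion-free sheaf $\Ee$, every $E^\sigma_m$ embeds into the generic fiber $E=\Gamma(T,\Ee)_0$ via the transition maps to the zero cone, so all data become subspaces of $E$.
\item For a ray $\rho$, the semigroup $S_\rho$ is a half-space. Hence $E^\rho_m$ depends only on $\langle m,u_\rho\rangle$, and setting $E^\rho(j):=E^\rho_m$ for $\langle m,u_\rho\rangle=j$ yields a decreasing $\ZZ$-filtration of $E$. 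Fullness follows from coherence.
\item Reflexivity means $\Ee=j_*(\Ee|_{U})$, where $U$ is the union of the orbits of codimension at most one. On each $U_\sigma$ this gives $E^\sigma_m=\bigcap_{\rho\in\sigma(1)}E^\rho(\langle m,u_\rho\rangle)$ \cite[Theorem 4.21]{Per03}.
\item Conversely, an arbitrary collection of full filtrations defines $\sigma$-families by this intersection formula. These are automatically compatible under face restrictions, and the resulting sheaf is reflexive: it is the pushforward of a sheaf that is locally free on codimension-$\le 1$ orbits, where the chart is the product of $\CC$ with a torus.
\item Morphisms correspond to linear maps $E\to F$ preserving all filtrations. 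This gives full faithfulness; essential surjectivity is the construction in (4).
\end{enumerate}

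\textbf{Step 2: the bundle criterion (C).} Local freeness is checked chart by chart, and since $X$ is smooth each $U_\sigma\cong\CC^k\times(\CC^\times)^{n-k}$.

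If $\Ee|_{U_\sigma}$ is locally free, equivariance implies it is a direct sum of equivariant line bundles $\Oo\cdot\chi^{m_i}$. The standard argument is that the fiber at the fixed point of the orbit $O_\sigma$ is a $T$-representation; one lifts an eigenbasis and applies graded Nakayama. Each summand $\chi^{m_i}$ has filtrations of the form $E^\rho(j)=\CC$ for $j\le\langle m_i,u_\rho\rangle$ and $0$ otherwise. So the basis from the summands splits all $E^{\rho}(\bullet)$, $\rho\in\sigma(1)$, simultaneously into coordinate subspaces.

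Conversely, given such a common basis $e_1,\dots,e_r$, each $e_i$ has a jump index $a_{i\rho}$ for every $\rho\in\sigma(1)$. Because the $u_\rho$ for $\rho\in\sigma(1)$ are part of a $\ZZ$-basis of $N$ (smoothness), there exists $m_i\in M$ with $\langle m_i,u_\rho\rangle=a_{i\rho}$. Then the intersection formula shows $\Ee|_{U_\sigma}\cong\bigoplus_i\Oo_{U_\sigma}\chi^{m_i}$.

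\textbf{Main obstacle.} I expect the hardest step to be the local freeness implies splitting direction: producing an equivariant trivialization, which requires the graded Nakayama lift using the positive grading on $\CC[S_\sigma]$. The reflexivity identification in Step 1(3) is the other delicate point, but it can be cited from Perling.
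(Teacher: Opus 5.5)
The paper does not prove this statement. It quotes Klyachko's theorem and recalls Perling's $\sigma$-families (\cite[Theorems 4.5 and 4.21]{Per03}) only as background, so there is no internal argument to match. Your derivation is the standard one built on exactly those two results, and it is correct. Compared with the bare citation, it shows explicitly where each hypothesis enters:
\begin{itemize}
\item fullness of the filtrations gives compatibility under face restriction;
\item the codimension-two extension property of reflexive sheaves gives the intersection formula $E^\sigma_m=\bigcap_{\rho\in\sigma(1)}E^\rho(\cdot)$;
\item smoothness is what lets you realize the jump indices $a_{i\rho}$ by characters $m_i$ in the converse direction of (C).
\end{itemize}

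A few small points to tidy.

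\textbf{Sign convention.} Take the convention the paper states: the transition map $E^\sigma_m\to E^\sigma_{m'}$ is multiplication by $\chi^{m'-m}$ for $m'-m\in S_\sigma$. Identifying $E^\sigma_m$ with $\chi^{-m}\Gamma(U_\sigma,\Ee)_m\subseteq E$ turns the transition maps into inclusions. So $j\mapsto E^\rho_m$ with $\langle m,u_\rho\rangle=j$ is increasing, not decreasing; for $\Oo_X$ it is $0$ for $j<0$ and $E$ for $j\ge 0$. To obtain Klyachko's decreasing filtrations, index by $\langle m,u_\rho\rangle=-j$. The intersection formula in Step 1(3) and the jump description in Step 2 then change by the same sign, and nothing else is affected.

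\textbf{Fixed points.} In Step 2, the orbit $O_\sigma$ is a fixed point only when $\dim\sigma=n$. Since $X$ is complete, every cone is a face of an $n$-dimensional cone. Moreover, (C) for a cone implies (C) for each of its faces. So it suffices to run the graded Nakayama argument on maximal cones, where $\CC[S_\sigma]$ is positively graded with degree-zero part $\CC$. Alternatively, split off the Laurent factor $\CC[\sigma^\perp\cap M]$ first.

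\textbf{Finite generation.} In Step 1(4), record that $\bigoplus_m E^\sigma_m$ is finitely generated. By fullness there are integers $b_\rho$ with $E^\sigma_m=0$ unless $\langle m,u_\rho\rangle\ge b_\rho$ for all $\rho\in\sigma(1)$ (in the increasing convention). By smoothness this region is a translate $m_0+S_\sigma$. Hence the module sits inside the free module $E\otimes\chi^{m_0}\CC[S_\sigma]$, and is therefore finitely generated over the Noetherian ring $\CC[S_\sigma]$.
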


\noindent Let $X$ be a smooth toric variety with a polarization $H$. 
\begin{definition}
A $T$-equivariant vector bundle $\Ee$ on $X$ is called \emph{arithmetically Cohen--Macaulay} with respect to $H$, or simply \emph{$H$-aCM}, if
\[
 \mathrm{H}^i\bigl(X,\Ee \otimes \Oo_X(tH)\bigr)=0
 \qquad
 \text{for every $t\in\mathbb Z$ and $0<i<n$.}
\]
The exact category of $T$-equivariant $H$-aCM vector bundles is denoted by $\mathrm{aCM}_T(X,H)$.
\end{definition}

For a character $\chi\in M$, let $\CC_\chi$ denote the trivial line bundle endowed with the $T$-linearization determined by $\chi$.

\begin{definition}
For $\Ee,\Ff\in\mathrm{aCM}_T(X,H)$, write $\Ee\sim_{H}\Ff$ if there exist $q\in\mathbb \ZZ$ and $\chi\in M$ such that
\[
\Ff\cong_T \Ee\otimes \Oo_X(qH)\otimes\CC_\chi.
\]
Thus we identify both the usual degree shifts and changes of equivariant structure by a character of the torus. Note that the quotient by character twists is essential; without it, a single underlying bundle generally gives infinitely many equivariant bundles by changing its linearization.
\end{definition}

\begin{definition}
Let $S$ be a variety on which $T$ acts trivially. An \emph{algebraic family} of toric $H$-aCM bundles parametrized by $S$ is a $T$-equivariant vector bundle $\Ee$ on $X\times S$ such that
\[
 \Ee_s:=\Ee_{|{X\times\{s\}}}
\]
is $H$-aCM for every $s\in S$. The family consists of pairwise non-equivalent indecomposables if the underlying vector bundle of $\Ee_s$ is indecomposable for each $s\in S$ and
\[
s\ne s'\quad\Longrightarrow\quad \Ee_s\not\sim_{H}\Ee_{s'}.
\]
\end{definition}

\begin{definition}
The polarized toric variety $(X,H)$ is of \emph{toric-finite} representation type if the set
\[
 \left\{
 \begin{array}{c}
 \text{indecomposable objects of }\mathrm{aCM}_T(X,H)
 \end{array}
 \right\}\big/\!\sim_H
\]
is finite. The polarized variety is of \emph{toric-tame} representation type if it is not toric-finite and, for every positive integer $r$, the indecomposable objects of rank $r$ in $\aCM_T(X,H)$, modulo $\sim_H$, are covered by finitely many algebraic families $\Ee_j\rightarrow X\times S_j$ with $\dim S_j\leq 1$, and possibly finitely many exceptional isomorphism classes. Finally, $(X,H)$ is of \emph{toric-wild} representation type if there exists a $\CC$-linear exact functor
\[
\Phi:\operatorname{mod}\CC \langle x,y\rangle \longrightarrow \aCM_T(X,H)
\]
such that
\begin{enumerate}
\item $M$ indecomposable implies that $\Phi(M)$ is indecomposable;
\item for all finite-dimensional $\CC\langle x,y\rangle$-modules $M,N$,
\[
  \Phi(M)\sim_H \Phi(N)\quad\Longleftrightarrow\quad M\cong N.
\]
\end{enumerate}
In other words, the functor $\Phi$ is a representation embedding.
\end{definition}

\begin{remark}
The category $\mathrm{mod}\CC\langle x,y\rangle$ is the classical test object for wildness: finite-dimensional modules are arbitrary pairs of matrices up to simultaneous similarity. This viewpoint goes back to Nazarova's work on infinite-type quivers and to Drozd's tame--wild theory; see \cite{Nazarova,Drozd}. Requiring an exact functor that preserves indecomposables and reflects isomorphisms is the corresponding representation-embedding formulation. In the toric setting we additionally pass to $\sim_{H}$, since polarization twists and changes of linearization should not create new representation types; compare the Cohen--Macaulay formulation in \cite{DrozdGreuel}.
\end{remark}

\begin{comment}
\begin{remark}
One may refine the tame case as follows.
\begin{itemize}
\item $(X,H)$ is of \emph{toric-discrete type} if there are infinitely many classes but no positive-dimensional families of pairwise non-equivalent indecomposables.
\item $(X,H)$ is of \emph{properly toric-tame type} if one-dimensional families occur but all indecomposables are covered, rank by rank, by finitely many families of dimension at most one and finitely many exceptions.
\end{itemize}
\end{remark}
\end{comment}

\begin{remark}
The polarized toric variety $(X,H)$ is called \emph{geometrically toric-wild}, if for every integer $N>0$ there exists an algebraic family $\Ee\rightarrow X\times S$ with $\dim S\geq N$ of pairwise non-equivalent indecomposable toric $H$-aCM bundles. Then toric-wildness in the representation-embedding sense implies geometric toric wildness; see \cite[Definition 1.4]{DrozaGreuel2} and \cite[Remark 3.2]{KleppeMiroRoig}
\end{remark}

The following is the homogeneity theorem for canonical decomposition by Schofield; see \cite[Theorem 3.8]{Scho} and \cite[Remark 2 and Theorem 3]{DW}. 
\begin{theorem}[Schofield]\label{schofield-homogeneity}
Let $Q$ be an acyclic quiver and let
\[
\mathrm{can}(\alpha)=\beta_1\oplus\cdots\oplus\beta_s
\]
be the canonical decomposition of a dimension vector $\alpha$, where each $\beta_i$ is a Schur root and a general representation of dimension $\alpha$ is a direct sum of representations of dimensions $\beta_1,\ldots,\beta_s$. For a Schur root $\beta$, define
\[
\beta^{[n]}=
\begin{cases}
n\beta,&\beta\text{ is imaginary and non-isotropic},\\[2mm]
\underbrace{\beta\oplus\cdots\oplus\beta}_{n\text{ copies}},&\beta\text{ is real or isotropic}.
\end{cases}
\]
Then, for every integer $n\geq1$,
\[
\mathrm{can}(n\alpha)= \beta_1^{[n]}\oplus\cdots\oplus\beta_s^{[n]}.
\]
In particular, if $\beta$ is an imaginary non-isotropic Schur root, then $\mathrm{can}(n\beta)=n\beta$ is a single canonical summand. Consequently, $n\beta$ is again a Schur root for every $n\geq1$.
\end{theorem}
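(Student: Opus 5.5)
The statement is quoted from \cite[Theorem 3.8]{Scho} and \cite{DW}. If I had to reprove it, I would use Schofield's characterization of canonical decompositions: a decomposition $\alpha=\gamma_1+\cdots+\gamma_s$ is canonical if and only if each $\gamma_i$ is a Schur root and $\mathrm{ext}(\gamma_i,\gamma_j)=0$ for all $i\neq j$. Here $\mathrm{ext}(\cdot,\cdot)$ is the generic (minimal) value of $\dim\Ext^1$ over general representations. The proof then reduces to two claims.
\begin{enumerate}
\item The vanishing of $\mathrm{ext}$ between distinct summands propagates to multiples.
\item Each block $\beta^{[n]}$ is itself a sum of Schur roots with pairwise vanishing generic $\mathrm{ext}$, including vanishing within the block.
\end{enumerate}

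\emph{Claim 1.} Suppose $\mathrm{ext}(\beta,\gamma)=0$, and choose general $V$ of dimension $\beta$ and $W$ of dimension $\gamma$ with $\Ext^1(V,W)=0$. Then $\Ext^1(V^{\oplus n},W^{\oplus m})=\Ext^1(V,W)^{nm}=0$. By upper semicontinuity, $\mathrm{ext}(n\beta,m\gamma)=0$. This gives all cross-vanishings $\mathrm{ext}(\beta_i^{[n]},\beta_j^{[n]})=0$ for $i\ne j$, in both orders. It also handles the individual blocks for real and isotropic $\beta$: there $q(\beta)\in\{0,1\}$ and $\hom(\beta,\beta)=1$ force $\mathrm{ext}(\beta,\beta)=0$. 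So $n$ copies of $\beta$ is an admissible decomposition of $n\beta$, and it is canonical. It cannot coarsen to a single Schur summand $n\beta$: for $n\ge2$, $q(n\beta)=n^2q(\beta)$ is $n^2\ge 2$ in the real case and $0$ in the isotropic case, and neither is compatible with $n\beta$ being Schur. In the real case a Schur root must satisfy $q=1$. In the isotropic case a Schur root with $q=0$ must be indivisible.

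\emph{Claim 2, the main obstacle.} I must show that $n\beta$ is Schur when $\beta$ is an imaginary Schur root with $q(\beta)<0$. My first approach is King stability. Let $\theta=\langle\beta,\cdot\rangle-\langle\cdot,\beta\rangle$. Schofield's criterion shows that a general representation of dimension $\beta$ is $\theta$-stable, so the $\theta$-stable locus for $\beta$ is nonempty and the moduli space has dimension $1-q(\beta)$.

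A general representation of dimension $n\beta$ is then $\theta$-semistable, since direct sums of semistables are semistable and semistability is open. I would then compare dimensions inside the moduli space $\mathcal M^{\theta\text{-ss}}(n\beta)$, which has dimension $1-n^2q(\beta)$. Consider a polystable stratum of type $\bigoplus_k S_k$ with $\dim S_k=\gamma_k$, $\theta(\gamma_k)=0$ and $\sum\gamma_k=n\beta$. Such a stratum has dimension at most $\sum_k(1-q(\gamma_k))$. The task is to prove
\[
1-q(n\beta)>\sum_{k=1}^s\bigl(1-q(\gamma_k)\bigr)\qquad\text{for }s\ge2,
\]
which is equivalent to $-\sum_{k\ne l}\langle\gamma_k,\gamma_l\rangle>s-1$.

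When all $\gamma_k=n_k\beta$, this is $-q(\beta)(n^2-\sum n_k^2)\ge 2(s-1)$, which suffices. The difficulty is that the $\gamma_k$ need not be proportional to $\beta$. To handle this I would argue by induction on $n$, combined with Schofield's subrepresentation criterion: for a general subdimension vector $\gamma\hookrightarrow n\beta$, one has $\theta(\gamma)\ge0$, and I would aim to show $\theta(\gamma)>0$ unless $\gamma\in\{0,n\beta\}$. The case $\theta(\gamma)=0$ should be ruled out by the homogeneity of $\theta$ and the Euler form, using that $\theta$ vanishes on $\beta$ and $\langle\beta,\beta\rangle<0$.

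If the induction stalls, the fallback is the Derksen--Weyman argument via semi-invariants \cite[Theorem 3]{DW}. There one shows that $\dim\mathrm{SI}(Q,n\beta)_{\theta}$ grows fast enough that the $\theta$-stable locus is nonempty. A dense $\theta$-stable locus makes a general representation of dimension $n\beta$ have endomorphism ring $\CC$, so $n\beta$ is a Schur root.
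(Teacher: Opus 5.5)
The paper does not prove this statement; it only cites Schofield and Derksen--Weyman. Reducing to the Kac--Schofield criterion is the right frame, and the cross-vanishing in Claim~1 is fine. But the heart of the theorem, Claim~2, is not proved.

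\textbf{Claim~2 is not proved.} Your moduli comparison is circular. The equality $\dim\mathcal{M}^{\theta\text{-ss}}(n\beta)=1-q(n\beta)$ is available only once $\theta$-stable points of dimension $n\beta$ exist, which is exactly the conclusion. Without it, showing that every strictly polystable stratum has dimension $<1-q(n\beta)$ contradicts nothing.

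The hope that $\theta(\gamma)=0$ is excluded ``by homogeneity'' cannot become a formal argument either. For an isotropic Schur root the same formal data hold, and $\gamma=\beta\hookrightarrow 2\beta$ genuinely has $\theta(\gamma)=0$. The fallback paragraph just re-cites the theorem. What is missing is a mechanism that turns $q(\beta)<0$ into non-split extensions.

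One such mechanism uses only tools you already set up:
\begin{enumerate}
\item The $\theta$-stable moduli for $\beta$ has dimension $1-q(\beta)\ge 2$. So choose pairwise non-isomorphic $\theta$-stable $V_1,\dots,V_n$ of dimension $\beta$. Stability gives $\Hom(V_i,V_j)=0$ for $i\neq j$.
\item Put $E_1=V_1$, and let $E_k$ be an extension $0\to E_{k-1}\to E_k\to V_k\to 0$ with class $\xi_k\neq 0$. Using the filtration of $E_{k-1}$, both $\Hom(V_k,E_{k-1})$ and $\Hom(E_{k-1},V_k)$ vanish. Hence $\dim\Ext^1(V_k,E_{k-1})=-(k-1)q(\beta)>0$, so $\xi_k$ exists.
\item Any $\phi\in\End(E_k)$ preserves $E_{k-1}$. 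By induction it acts on $E_{k-1}$ and on $V_k$ by scalars $a$ and $b$. Then $a\xi_k=b\xi_k$ forces $a=b$, and $\phi-a\,\mathrm{id}$ factors through $\Hom(V_k,E_{k-1})=0$.
\item So $E_n$ is a brick of dimension $n\beta$. Upper semicontinuity of $\dim\End$ on the affine space $\Rep(n\beta)$ then shows $n\beta$ is a Schur root.
\end{enumerate}
The hypothesis $q(\beta)<0$ enters exactly in $\xi_k\neq 0$, which is why the isotropic case behaves differently.

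\textbf{The isotropic half of Claim~1 is justified incorrectly.} A generic value $\hom(\beta,\beta)=1$ together with $q(\beta)=0$ would give $\mathrm{ext}(\beta,\beta)=1$, not $0$. Indeed, a single Schur representation $V$ with $q(\beta)=0$ has $\dim\Ext^1(V,V)=1$. The correct reason uses the same stability input as above. The stable moduli is one-dimensional, so two general representations $V,W$ of an isotropic Schur root are non-isomorphic and $\theta$-stable. Hence $\Hom(V,W)=0$ and $\dim\Ext^1(V,W)=-q(\beta)=0$.

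Finally, the ``cannot coarsen'' remark is unnecessary once the Kac--Schofield criterion is verified. Its isotropic part, that isotropic Schur roots are indivisible, is itself part of what the theorem asserts, so it should not be used as input.
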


\noindent Here, for the Tits form defined below, a Schur root is called \emph{real} when $q(\alpha)=1$, \emph{isotropic imaginary} when $q(\alpha)=0$, and \emph{non-isotropic imaginary} when $q(\alpha)<0$. For the following computation, refer to \cite{DW-book}. 

\begin{lemma}\label{euler-hom-ext}
Let $Q=(Q_0,Q_1)$ be a finite acyclic quiver. For dimension vectors
$\alpha,\beta\in\ZZ^{Q_0}$, define the Euler form by
\[
\langle\alpha,\beta\rangle_Q=\sum_{v\in Q_0}\alpha(v)\beta(v)-\sum_{a:v\rightarrow w\in Q_1}\alpha(v)\beta(w).
\]
If $M,N$ are finite-dimensional representations of $Q$, then we have
\[
\langle\dim M,\dim N\rangle_Q=\dim_\CC \Hom_Q(M,N)-\dim_\CC \Ext_Q^1(M,N)
\]
In particular, the Tits form
\[
q_Q(\alpha):=\langle\alpha,\alpha\rangle_Q=\sum_{v\in Q_0}\alpha(v)^2-\sum_{a:v\rightarrow  w\in Q_1}\alpha(v)\alpha(w)
\]
satisfies
\[
 q_Q(\dim M)=\dim_\CC \End_Q(M)-\dim_\CC \Ext_Q^1(M,M).
\]
\end{lemma}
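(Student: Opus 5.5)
The statement to prove is Lemma~\ref{euler-hom-ext}: the Euler form equals $\dim\Hom-\dim\Ext^1$ for representations of an acyclic quiver, and the Tits form specialises to $\dim\End-\dim\Ext^1$ on a single representation. The plan is to realise $\Hom_Q(M,N)$ and $\Ext^1_Q(M,N)$ as the kernel and cokernel of one explicit linear map, and then compute the Euler characteristic of that map.

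Given representations $M=(M_v,M_a)$ and $N=(N_v,N_a)$, consider the Ringel map
\[
d_{M,N}\colon \bigoplus_{v\in Q_0}\Hom_\CC(M_v,N_v)\longrightarrow \bigoplus_{a:v\to w}\Hom_\CC(M_v,N_w),\qquad (\phi_v)_v\mapsto \bigl(\phi_w M_a-N_a\phi_v\bigr)_{a:v\to w}.
\]
By definition, $\ker d_{M,N}=\Hom_Q(M,N)$. The claim is then that $\operatorname{coker} d_{M,N}\cong\Ext^1_Q(M,N)$, and that $\Ext^i_Q=0$ for $i\ge 2$.

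To prove the claim, I would use the standard projective resolution over the path algebra $\CC Q$, which is finite-dimensional because $Q$ is acyclic:
\[
0\to\bigoplus_{a:v\to w}P_w\otimes M_v\to\bigoplus_{v}P_v\otimes M_v\to M\to 0.
\]
Here $P_v=\CC Q e_v$ is the indecomposable projective at $v$, and the first map sends $p\otimes m\mapsto pa\otimes m-p\otimes M_a m$. The key inputs are:
\begin{itemize}
\item exactness of this sequence, checked vertexwise by a grading argument on path length (acyclicity makes the filtration finite);
\item the isomorphism $\Hom_Q(P_v\otimes M_v,N)\cong\Hom_\CC(M_v,N_v)$.
\end{itemize}
Applying $\Hom_Q(-,N)$ to the resolution yields exactly the complex $d_{M,N}$. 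Hence $\Hom$ is its kernel, $\Ext^1$ is its cokernel, and the higher $\Ext$ groups vanish because the resolution has length one. The main obstacle is verifying exactness of the resolution, in particular injectivity on the left. For that I would compare leading terms with respect to path length: the term $pa\otimes m$ has strictly longer paths than $p\otimes M_a m$.

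Taking dimensions, the rank–nullity theorem gives
\[
\dim\Hom-\dim\Ext^1=\sum_v\dim M_v\dim N_v-\sum_{a:v\to w}\dim M_v\dim N_w=\langle\dim M,\dim N\rangle_Q.
\]
The Tits-form identity follows by setting $N=M$, since $\Hom_Q(M,M)=\End_Q(M)$. Alternatively, one may simply cite \cite{DW-book}, as the paper does.
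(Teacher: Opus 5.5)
Your proof is correct and is the standard one. The paper gives no argument of its own and simply cites \cite{DW-book}, where the formula is obtained the same way: the length-one standard projective resolution yields $0\rightarrow\Hom_Q(M,N)\rightarrow\bigoplus_{v}\Hom_\CC(M_v,N_v)\xrightarrow{d_{M,N}}\bigoplus_{a:v\rightarrow w}\Hom_\CC(M_v,N_w)\rightarrow\Ext^1_Q(M,N)\rightarrow 0$, which is also the description $\Ext^1_Q(R,R)\cong\operatorname{coker}(\delta)$ the paper uses later in the proof of Theorem~\ref{embedding}.
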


%%%%%%%%%%%%%%%%%%%%%%%%%%%%%%%%

\section{Veronese surface}

\begin{definition}
For a positive integer $d$, a coherent sheaf $\Ee$ on $\mathbb{P}^n$ of dimension $n$ is called {\it $d$-arithmetically Cohen–Macaulay} (abbreviated as ${d}$-aCM) if $\mathrm{H}^{i}(\mathbb{P}^n,\mathcal{E}\otimes\mathcal{O}_{\mathbb{P}^n}(dt))=0$ for all $0 < i < n$ and $t \in \mathbb{Z}$.
\end{definition}

From now on, we focus on the case $n = 2$. The fan of $\mathbb{P}^2$ has three rays, denoted by $\rho_0$, $\rho_1$, and $\rho_2$, whose primitive generators are $u_{\rho_0} = (1,0)$, $u_{\rho_1} = (0,1)$, and $u_{\rho_2} = (-1,-1)$, respectively. We denote the divisors corresponding to $\rho_0$, $\rho_1$, $\rho_2$ by $D_0$, $D_1$, $D_2$ respectively. For each cyclic permutation $(i,j,k)$ of $(0,1,2)$, we let $p_i := D_j \cap D_k$. Since each $D_i$ is a $T$-invariant line, the points $p_i$ are fixed under the torus action on $\mathbb{P}^2$. 

\begin{remark}\label{compability}
Note that any two finite filtrations of a finite-dimensional vector space are simultaneously splittable. Equivalently, by the relative-position description of pairs of flags (or the Bruhat decomposition), there is a basis adapted to both filtrations. This asserts the Klyachko compatibility on a two-dimensional cone so that one can always obtain a toric vector bundle on $\PP^2$ from the Klyachko decreasing filtrations.
\end{remark}

Then, a vector bundle $\Ee$ on $\PP^2$ is $d$-aCM if and only if we have
\[
\mathrm{H}^1(\PP^2, \Ee\otimes \Oo_{\PP^2}(dt))=0
\]
for all $t\in \ZZ$. From \cite{Kly90, HH} we can get the formula for the first cohomology of the toric bundle $\Ee$.
\\
\setcounter{equation}{1}
\begin{equation}
\mathrm{H}^{1}(\PP^2, \Ee)_m = 
\frac{E^{\mathbf{\rho}_{0}}_{m} \cap (E^{\mathbf{\rho}_{1}}_{m} +E^{\mathbf{\rho}_{2}}_{m})}{(E^{\mathbf{\rho}_{0}}_{m} \cap E^{\mathbf{\rho}_{1}}_{m})+(E^{\mathbf{\rho}_{0}}_{m} \cap E^{\mathbf{\rho}_{2}}_{m})} \label{eq:cohop2}
\end{equation}

\begin{definition}
For three subspaces $A_0,A_1,A_2\subseteq E$ with $a_i=\dim A_i$ for each $i$, we define
\begin{equation}\label{defect-quotient}
\Delta(A_0,A_1,A_2):= \dim \frac{A_0\cap(A_1+A_2)}{(A_0\cap A_1)+(A_0\cap A_2)}
\end{equation}
called the {\it distributivity defect}. If these subspaces are in general positions, we have
\begin{equation}\label{eta}
\begin{aligned}
\Delta(A_0,A_1,A_2)=\eta_r(a_0, a_1, a_2):={}&a_0+a_1+a_2 \min(r,a_0+a_1+a_2)\\
&-\max(0,a_0+a_1-r)-\max(0,a_1+a_2-r)\\
&-\max(0,a_2+a_0-r)+\max(0,a_0+a_1+a_2-2r).
\end{aligned}
\end{equation}
\end{definition}

\begin{remark}\label{Delta-property}
The expression measures the failure of the distributive identity
\[
A_0\cap(A_1+A_2)=(A_0\cap A_1)+(A_0\cap A_2).
\]
Thus $\Delta=0$ exactly when this identity holds for the ordered triple $(A_0;A_1,A_2)$. If one of $A_0,A_1,A_2$ is $0$ or $V$, then we have $\Delta(A_0,A_1,A_2)=0$. Consequently, only triples of proper nonzero filtration stages can contribute. By the dimension formula for a sum of two subspaces, we have
\begin{equation}\label{defect-dimension}
\begin{aligned}
\Delta(A_0,A_1,A_2)
={}&
\dim\bigl(A_0\cap(A_1+A_2)\bigr)+\dim(A_0\cap A_1\cap A_2)\\
&-\dim(A_0\cap A_1)-\dim(A_0\cap A_2).
\end{aligned}
\end{equation}
Formula \eqref{eta} is homogeneous under simultaneous scaling:
\[
\eta_{kr}(ka,kb,kc)=k\,\eta_r(a,b,c).
\]
This is the property used in scalable Klyachko flag constructions of
large-dimensional families of toric $d$-aCM bundles.
\end{remark}

\begin{theorem}\label{generalized-lemma}
Let $\Ee$ be a toric vector bundle of arbitrary rank $r$ on $\PP^2$. Then we have $\mathrm{H}^1(\Ee)\ne 0$ if and only if there exists a triple $(j_0,j_1,j_2)\in\mathbb \ZZ^{\oplus 3}$ with $j_0+j_1+j_2=0$ such that
\[
\Delta\left( E^{\rho_0}(j_0), E^{\rho_1}(j_1), E^{\rho_2}(j_2) \right)>0.
\]
Indeed, we have
\begin{equation}\label{weight-defect}
\dim \mathrm{H}^1(\Ee)_m=\Delta\left(E^{\rho_0}(j_0),E^{\rho_1}(j_1),E^{\rho_2}(j_2)\right)
\end{equation}
for $m=(j_0,j_1)\in M$ and $j_2=-j_0-j_1$,
\end{theorem}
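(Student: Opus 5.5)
The plan is to compute $\mathrm{H}^1(\Ee)$ weight by weight with the Čech complex of the standard $T$-invariant affine cover of $\PP^2$, and then to identify each graded piece with a distributivity defect. Let $U_0,U_1,U_2$ be the maximal cones' charts, where $U_i$ is the chart of the cone spanned by $\rho_j,\rho_k$, i.e.\ the complement of $D_i$. By Klyachko's description (Theorem~\ref{kly}), for a reflexive equivariant sheaf and a cone $\sigma$ we have
\[
\Gamma(U_\sigma,\Ee)_m=\bigcap_{\rho\in\sigma(1)}E^{\rho}(\langle m,u_\rho\rangle),
\]
viewed inside $E$, and $\Gamma(T,\Ee)_m=E$. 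With $u_0=(1,0)$, $u_1=(0,1)$, $u_2=(-1,-1)$ and $m=(j_0,j_1)$, we get $\langle m,u_i\rangle=j_i$, where $j_2=-j_0-j_1$. Write $A_i=E^{\rho_i}(j_i)$. Then:
\begin{itemize}
\item on the maximal cones, $\Gamma(U_{\sigma_{jk}},\Ee)_m=A_j\cap A_k$;
\item on the rays, the intersections of two charts, $\Gamma(U_{\rho_i},\Ee)_m=A_i$;
\item on the triple intersection (the torus), $\Gamma=E$.
\end{itemize}
Cohomology of the torus-equivariant Čech complex is graded, so $\mathrm{H}^1(\Ee)_m$ is the degree-one cohomology of the complex
\[
0\to \bigoplus_{i}\bigl(A_j\cap A_k\bigr)\to A_0\oplus A_1\oplus A_2\to E\to 0,
\]
with the maps being the usual alternating sums of inclusions. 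Note that $U_j\cap U_k$ is the chart of the common ray $\rho_i$.

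The key step is the linear algebra identifying this middle cohomology with the defect. Consider the kernel of $A_0\oplus A_1\oplus A_2\to E$, given by $(a_0,a_1,a_2)\mapsto \pm a_0\pm a_1\pm a_2$. Up to signs it is the space of triples with $a_0=a_1+a_2$. It therefore projects onto $\{(a_1,a_2)\in A_1\times A_2:\ a_1+a_2\in A_0\}$. This space maps onto $A_0\cap(A_1+A_2)$, with kernel $\{(x,-x): x\in A_1\cap A_2\}$.

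The image of the first differential is generated by elements with entries from pairwise intersections. Under the projection to $A_0$, namely $a_1+a_2$, it maps onto $(A_0\cap A_1)+(A_0\cap A_2)$. It also contains $(0,x,-x)$ for $x\in A_1\cap A_2$, coming from the summand $A_1\cap A_2$. Hence the quotient is
\[
\frac{A_0\cap(A_1+A_2)}{(A_0\cap A_1)+(A_0\cap A_2)},
\]
which is formula \eqref{eq:cohop2}. This gives \eqref{weight-defect}. I will use \eqref{eq:cohop2} from \cite{Kly90,HH} directly and only verify the indexing, so the Čech computation mainly serves to confirm that the weight $m$ corresponds to $j_i=\langle m,u_i\rangle$.

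The "if and only if" then follows by summing over $m$: $\mathrm{H}^1(\Ee)=\bigoplus_m \mathrm{H}^1(\Ee)_m$. The map $m\mapsto (j_0,j_1,-j_0-j_1)$ is a bijection from $M$ onto the triples with $j_0+j_1+j_2=0$. Hence $\mathrm{H}^1(\Ee)\neq0$ if and only if some such triple has positive defect.

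The main obstacle is bookkeeping: checking the sign and convention for the filtration index, since $\langle m,u_\rho\rangle$ versus $-\langle m,u_\rho\rangle$ depends on the decreasing-filtration convention in Theorem~\ref{kly}. One must make sure the sum condition comes out as $j_0+j_1+j_2=0$; either sign convention gives this, because $u_0+u_1+u_2=0$. Finally, the general-position formula \eqref{eta} is not needed here.
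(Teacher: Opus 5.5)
Your proposal is correct and takes the same approach as the paper: the paper simply invokes the weight-space formula \eqref{eq:cohop2} with $(j_0,j_1,j_2)=(a,b,-a-b)$ and identifies it with the quotient in \eqref{defect-quotient}. You go slightly further by rederiving \eqref{eq:cohop2} from the equivariant \v{C}ech complex of the standard affine cover, and by spelling out the summation over weights $m\in M$ that gives the ``if and only if'', which the paper leaves implicit.
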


\begin{proof}
For a character $m=(a,b)\in M\cong \mathbb \ZZ^{\oplus 2}$
\[
\mathrm{H}^1(\Ee)_m\simeq\frac{E^{\rho_0}(j_0)\cap \bigl(E^{\rho_1}(j_1)+E^{\rho_2}(j_2)\bigr)}{\bigl(E^{\rho_0}(j_0)\cap E^{\rho_1}(j_1)\bigr)
+\bigl(E^{\rho_0}(j_0)\cap E^{\rho_2}(j_2)\bigr)}
\]
for $(j_, j_1, j_2)=(a,b,-a-b)$. Then the assertion follows directly from Definition \ref{defect-quotient}. 
\end{proof}

\begin{corollary}\label{dacm}
A toric vector bundle $\Ee$ of rank $r$ on $\PP^2$ is $d$-aCM if and only if
\begin{equation}\label{dacm-defect}
\Delta\left(E^{\rho_0}(j_0),E^{\rho_1}(j_1),E^{\rho_2}(j_2)\right)=0
\end{equation}
for every triple $(j_0,j_1,j_2)\in \ZZ^{\oplus 3}$ with $j_0+j_1+j_2 \equiv 0 \pmod d$. 
\end{corollary}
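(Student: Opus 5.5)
The plan is to reduce Corollary \ref{dacm} to Theorem \ref{generalized-lemma}, applied to the twists $\Ee(dt)$ for every $t\in\ZZ$. By definition, $\Ee$ is $d$-aCM if and only if $\mathrm{H}^1(\Ee\otimes\Oo_{\PP^2}(dt))=0$ for all $t$. So we need two things: the Klyachko filtrations of $\Ee(dt)$ in terms of those of $\Ee$, and a way to read off which triples $(j_0,j_1,j_2)$ are tested as $t$ varies.

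First we fix an equivariant structure on $\Oo_{\PP^2}(dt)$, which we may freely choose since character twists do not affect the vanishing of cohomology. We take $\Oo_{\PP^2}(dt)=\Oo(dt\,D_2)$, linearized so that its filtrations are
\[
L^{\rho_0}(j)=L^{\rho_1}(j)=\CC \text{ for } j\le 0,\qquad L^{\rho_2}(j)=\CC \text{ for } j\le dt,
\]
and zero otherwise. Klyachko filtrations of a tensor product with a line bundle simply shift, so
\[
(E\otimes L)^{\rho_i}(j)=E^{\rho_i}(j-c_i),\qquad (c_0,c_1,c_2)=(0,0,dt).
\]
We should check the sign conventions against the paper's setup. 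The point that matters is that the total shift $c_0+c_1+c_2$ equals $dt$ (or $-dt$, depending on convention) for any equivariant structure, because the shifts $c_i$ are the coefficients of a $T$-invariant divisor representing $\Oo(dt)$ and $\sum_i D_i\sim 3H$ with each $D_i\sim H$. Any other linearization changes $(c_0,c_1,c_2)$ by $(\langle m,u_i\rangle)_i$, and these sum to zero since $u_0+u_1+u_2=0$.

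Next we apply Theorem \ref{generalized-lemma} to $\Ee(dt)$. This gives $\mathrm{H}^1(\Ee(dt))\neq 0$ if and only if there is a triple $(k_0,k_1,k_2)$ with $k_0+k_1+k_2=0$ such that
\[
\Delta\bigl(E^{\rho_0}(k_0),E^{\rho_1}(k_1),E^{\rho_2}(k_2-dt)\bigr)>0.
\]
Setting $j_i=k_i-c_i$ turns this into the condition $\Delta(E^{\rho_0}(j_0),E^{\rho_1}(j_1),E^{\rho_2}(j_2))>0$ with $j_0+j_1+j_2=-dt$. As $t$ ranges over $\ZZ$, the set of triples with $j_0+j_1+j_2\in d\ZZ$ is covered exactly, and every such triple arises for a unique $t$. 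Hence all twists $\Ee(dt)$ have vanishing $\mathrm{H}^1$ if and only if $\Delta$ vanishes on every triple with $j_0+j_1+j_2\equiv 0\pmod d$, which is the statement.

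The only delicate step is the shift bookkeeping: confirming that the twist by $\Oo(dt)$ shifts the filtration indices by amounts summing to $\pm dt$, which depends on the paper's sign convention for the filtrations. This is harmless, since $d\ZZ$ is symmetric under negation. All remaining steps are direct substitutions.
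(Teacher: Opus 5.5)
Your proposal is correct and follows the paper's own proof: apply Theorem~\ref{generalized-lemma} to each twist $\Ee(dt)$, note that the twist shifts the three filtration indices by the coefficients of an invariant divisor of degree $dt$, and let $t$ range over $\ZZ$. Your explicit choice of linearization, together with the check that $u_0+u_1+u_2=0$ makes the total shift independent of that choice, spells out what the paper compresses into ``up to the harmless global sign.''
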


\begin{proof}
Tensoring by a toric line bundle translates the indices in the three Klyachko filtrations by the coefficients of the corresponding invariant divisor. A twist by $\Oo_{\PP^2}(dt)$ changes the total index sum by $dt$, up to the harmless global sign determined by the chosen decreasing-filtration convention.

By Theorem \ref{generalized-lemma}, the group $\mathrm{H}^1(\Ee(dt))$ is nonzero precisely when there is a triple of filtration indices whose total sum is $dt$ and whose defect is positive. As $t$ ranges over $\ZZ$, these are exactly the triples whose index sum is congruent to zero modulo $d$.
\end{proof}

\begin{remark}
For ranks greater than two, the dimensions of the three filtration spaces do not in general determine the defect: their relative positions also matter.  For subspaces in general position, however, the defect has a closed dimension-only formula. For $r=2$, nonsplitting forces the relevant three lines to be mutually distinct, so the dimensions determine the defect. For $r>2$, two triples of subspaces can have the same dimensions but different intersection and sum dimensions. Therefore the exact arbitrary-rank criterion is Corollary~\ref{dacm}, formulated using $\Delta$. The function $\eta_r$ is valid only on the general-position locus.
\end{remark}

\begin{proposition}
The Veronese surface $\bigl(\PP^2,\Oo_{\PP^2}(d)\bigr)$ for $d=1,2$ is toric finite. 
\end{proposition}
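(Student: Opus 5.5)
The plan is to reduce to the classical (non-equivariant) classification of aCM bundles, and then show that equivariance adds only the character ambiguity, which $\sim_H$ already quotients out. For $d=1$, Horrocks' criterion says that a bundle $\Ee$ on $\PP^2$ with $\mathrm{H}^1(\Ee(t))=0$ for all $t$ splits as a sum of line bundles. For $d=2$, I will use the fact that the second Veronese ring $\bigoplus_t \mathrm{H}^0(\Oo_{\PP^2}(2t))$ is a quotient singularity $\CC[x,y,z]^{\ZZ/2}$, hence of finite CM type (Auslander, Eisenbud--Herzog). Its indecomposable MCM modules correspond to the bundles $\Oo_{\PP^2}$, $\Oo_{\PP^2}(1)$ and $\Omega_{\PP^2}(2)\cong T_{\PP^2}(-1)$. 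As a cross-check on the twist, Bott's formula gives $\mathrm{H}^1(\Omega_{\PP^2}(k))\neq0$ only for $k=0$. So $\Omega_{\PP^2}(1+2t)$ is $2$-aCM for every $t$, whereas $\Omega_{\PP^2}(2)$ fails at $t=-1$. Hence the correct normalisation is $\Omega_{\PP^2}(1)$ (up to $\Oo(2)$-twists), and I would double-check this against the MCM list.

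Thus the underlying bundle of any toric $d$-aCM bundle is a direct sum of finitely many types. It remains to pass from underlying bundles to equivariant ones, and I would argue in two steps.

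\emph{Step 1: an indecomposable toric bundle has indecomposable underlying bundle in these cases.} For line-bundle summands this is Kaneyama's splitting: a toric bundle whose underlying bundle splits is equivariantly split. In Klyachko terms, the three filtrations are then simultaneously splittable. More generally I would use that $T$ is reductive. The automorphism group of the underlying bundle is an algebraic group on which $T$ acts, so a maximal torus of the stabiliser can be chosen $T$-stable. Decomposition idempotents can then be averaged to $T$-invariant ones, which produces an equivariant splitting compatible with Krull--Schmidt.

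\emph{Step 2: equivariant structures on a simple bundle differ by characters.} If $\Ee$ is simple, i.e.\ $\End(\Ee)=\CC$, any two $T$-linearisations differ by a cocycle $T\to\mathrm{Aut}(\Ee)=\CC^\times$, which is a character $\chi$; hence they are related by $\otimes\CC_\chi$. Line bundles and $\Omega_{\PP^2}(1)$ (which is stable) are simple. Combining the two steps, every indecomposable object of $\mathrm{aCM}_T(\PP^2,\Oo(d))$ is, modulo $\sim_H$, one of $\Oo_{\PP^2}$ for $d=1$, and one of $\Oo_{\PP^2}$, $\Oo_{\PP^2}(1)$, $\Omega_{\PP^2}(1)$ for $d=2$. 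This gives toric-finiteness.

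I expect Step 1 to be the main obstacle, in particular for summands of the form $\Omega_{\PP^2}(a)$ mixed with line bundles. In that case the averaging argument needs care, because $\mathrm{Aut}$ is not reductive. If it fails, I would fall back on a direct Klyachko computation using Corollary~\ref{dacm}. For $d=2$ one would show that in any indecomposable filtration datum all positive-defect triples force a configuration of three filtrations each of which has a single proper nonzero stage. These stages would be a common-dimension line or hyperplane in general position in a space of dimension at most $2$, namely the filtration of $T_{\PP^2}$. Any larger configuration would then either split off a coordinate subspace or produce $\Delta>0$ at an index sum $\equiv0\pmod 2$.
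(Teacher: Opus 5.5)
Your proposal is correct, and its backbone is the same as the paper's proof. You use Horrocks for $d=1$, and for $d=2$ the classical list $\Oo_{\PP^2}$, $\Oo_{\PP^2}(1)$, $\Omega^1_{\PP^2}(1)$. Your Bott check correctly moves you from $\Omega_{\PP^2}(2)$, which fails at $t=-1$, to $\Omega_{\PP^2}(1)$, which is exactly the paper's list.

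The difference is what comes afterwards. The paper stops after citing \cite{MR} and observing that the three bundles carry toric structures. It never argues two facts that are needed, because toric-finiteness is defined on $\mathrm{aCM}_T$ modulo $\sim_H$:
\begin{enumerate}
\item an equivariantly indecomposable toric $d$-aCM bundle has indecomposable underlying bundle;
\item a given underlying bundle carries only finitely many linearizations modulo $\sim_H$.
\end{enumerate}
Your Steps 1 and 2 supply exactly these. Your argument is therefore longer, but it closes the equivariant bookkeeping that the paper's proof leaves implicit. Step 2 is standard and correct as you state it: linearizations of a simple bundle differ by a cocycle $T\to\CC^\times$, i.e.\ a character. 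This applies because $\Oo_{\PP^2}(a)$ and the stable bundle $\Omega_{\PP^2}(1)$ are simple.

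There are two corrections.

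First, Step 1 is not the obstacle you fear, and the fallback Klyachko computation is unnecessary. Reductivity of $\mathrm{Aut}(\Ee)$ plays no role.
\begin{itemize}
\item Let $S'$ be a maximal torus of $\mathrm{Aut}(\Ee)\rtimes T$ containing $T$. Then $S=(S'\cap\mathrm{Aut}(\Ee))^{\circ}$ is a maximal torus of the normal subgroup $\mathrm{Aut}(\Ee)$.
\item Since $S'$ is commutative, $S$ commutes with $T$, so $S\subseteq\mathrm{Aut}_T(\Ee)$.
\item By Atiyah's maximal-torus form of Krull--Schmidt, the $S$-weight decomposition of $\Ee$ is a decomposition into indecomposables.
\item This decomposition is $T$-stable, because $S$ consists of equivariant automorphisms.
\end{itemize}
You should drop the phrase about averaging idempotents. $T$ is not compact, and an average of conjugates of an idempotent is in general not an idempotent.

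Second, being a quotient singularity does not by itself give finite CM type in dimension $3$; Auslander's theorem is two-dimensional. Finiteness for $\CC[x,y,z]^{(2)}$ is the Auslander--Reiten / Eisenbud--Herzog result. That is the same classical input the paper imports via \cite{MR}.
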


\begin{proof}
By Horrocks' splitting criterion, a vector bundle $\Ee$ on $\PP^2$ is aCM with respect to $\Oo_{\PP^2}(1)$ if and only if it is a direct sum of line bundles. Thus $\Oo_{\PP^2}$ is the only indecomposable aCM bundle on $\PP^2$ for $d=1$, which has a toric structure. In the cae when $d=2$, there are only three indecomposable aCM bundles on $\PP^2$:
\[
\left\{\Oo_{\PP^2}, ~~ \Oo_{\PP^2}(1), ~~ \Omega_{\PP^2}^1(1) \right\}
\]
by \cite{MR}, all of which have toric structures. 
\end{proof}

%%%%%%%%%%%%%%%%%%%%%%%%%%%%%%%%%%%%%%%%%%%%%%%%%%%%%%%%%

\section{Geometric toric wildness}
\begin{lemma}\label{d3-filtration-acm}
Let $E_n$ be a vector space of dimension $6n$. Choose three flags
\[
\begin{cases}
W_i^n\subset W_i^{3n}\subset W_i^{5n}\subset E_n, \qquad {\text for }~~i=0,1,\\
W_2^{2n}\subset W_2^{4n}\subset E_n,
\end{cases}
\]
in simultaneous general position with $\dim W_i^k=k$. Equip them with the decreasing filtration indices displayed in Table~\ref{d3-klyachko-filtrations}. Then these filtrations define a toric vector bundle $\Ee_n$ of rank $6n$ on $\PP^2$, and
\[
\mathrm{H}^1\bigl(\Ee_n(3t)\bigr)=0 \qquad\text{for every }t\in\ZZ.
\]
In particular, $\Ee_n$ is $3$-aCM.
\end{lemma}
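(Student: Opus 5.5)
The plan is to reduce everything to Corollary~\ref{dacm} and then to a finite combinatorial check. First I settle that the data define a toric bundle. Each filtration in Table~\ref{d3-klyachko-filtrations} is a full decreasing $\ZZ$-filtration of $E_n$: its only proper nonzero stages are the flag members $W_i^k$, and it is $E_n$ for small indices and $0$ for large ones. By Remark~\ref{compability}, any two of the three filtrations admit a common adapted basis. So condition (C) of Theorem~\ref{kly} holds on every two-dimensional cone of the fan of $\PP^2$, and Klyachko's theorem produces a toric vector bundle $\Ee_n$ of rank $6n$.

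For the vanishing $\mathrm{H}^1(\Ee_n(3t))=0$, I apply Corollary~\ref{dacm} with $d=3$. It suffices to show that every triple $(j_0,j_1,j_2)$ with $\Delta\bigl(E^{\rho_0}(j_0),E^{\rho_1}(j_1),E^{\rho_2}(j_2)\bigr)>0$ has $j_0+j_1+j_2\not\equiv 0\pmod 3$. By Remark~\ref{Delta-property}, any triple in which some stage is $0$ or $E_n$ has zero defect. So only triples of proper stages matter:
\[
A_0=W_0^{a},\quad A_1=W_1^{b},\quad A_2=W_2^{c},\qquad a,b\in\{n,3n,5n\},\ c\in\{2n,4n\}.
\]
This gives $18$ dimension patterns. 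Each pattern corresponds to an interval of indices $j_i$ on each ray, namely the range over which the stage is constant, and these intervals are read off from the table.

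Next I compute the defects. Because the three flags are in simultaneous general position, each triple $(W_0^a,W_1^b,W_2^c)$ lies in the open general-position locus, so its defect is given by the closed formula. I would use it in the form coming from \eqref{defect-dimension}:
\[
\Delta=\max\bigl(0,a+\min(6n,b+c)-6n\bigr)+\max(0,a+b+c-12n)-\max(0,a+b-6n)-\max(0,a+c-6n).
\]
By the homogeneity $\eta_{kr}(ka,kb,kc)=k\,\eta_r(a,b,c)$ it is enough to tabulate the case $n=1$, $r=6$. For example, $(1,3,4)$ gives defect $1$, while $(1,1,4)$ gives $0$. This yields a short explicit list of "bad" dimension patterns with positive defect. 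One also needs to justify that simultaneous general position of the flags really forces pairwise and triple general position of every choice of stages. This holds because it is a finite intersection of nonempty Zariski-open conditions on $\mathrm{Fl}(n,3n,5n;6n)^2\times\mathrm{Fl}(2n,4n;6n)$.

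The final step is to compare the list of bad patterns with the table. For each bad $(a,b,c)$, the set of attainable sums $j_0+j_1+j_2$ is a sum of three index intervals. I must show that this set contains no multiple of $3$. Since the intervals are typically single indices when the flag jumps by one index per stage, this reduces to checking finitely many residues mod $3$. I expect this matching to be the main obstacle. The indices in the table were presumably engineered so that each bad pattern hits a fixed nonzero residue, and the check is only as clean as the table. If some stage spans several indices, its interval of sums could contain a multiple of $3$. I would first verify that the table makes all proper stages occur at single indices, so that each bad pattern yields exactly one sum, and then read off its residue directly.
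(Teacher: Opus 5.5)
Your proposal is correct and is essentially the paper's proof: reduce via Corollary~\ref{dacm} and Remark~\ref{Delta-property} to the $18$ triples of proper stages, evaluate the general-position defect, and use homogeneity to reduce to $n=1$. (Your version of the defect formula, derived from \eqref{defect-dimension}, is the right one; the paper's display \eqref{eta} is missing a minus sign before $\min(r,a_0+a_1+a_2)$.) The matching step you worried about goes through cleanly: every proper stage in Table~\ref{d3-klyachko-filtrations} sits at a single index, and the ten patterns with positive defect all have index sum $-4$ or $-5$, neither divisible by $3$.
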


\begingroup
\renewcommand{\arraystretch}{1.5}
\begin{table}[ht]
\centering
\begin{tabular}{
    >{\centering\arraybackslash}m{1.2cm} |
    >{\centering\arraybackslash}m{1.35cm} |
    >{\centering\arraybackslash}m{1.35cm} |
    >{\centering\arraybackslash}m{1.35cm} |
    >{\centering\arraybackslash}m{1.35cm} |
    >{\centering\arraybackslash}m{1.35cm} |
    >{\centering\arraybackslash}m{1.35cm} |
    >{\centering\arraybackslash}m{1.35cm} |
    >{\centering\arraybackslash}m{1.35cm} 
}
\hline
 & $\cdots$ & $-4$ & $-3$ & $-2$ & $-1$ & $0$ & $1$ & $\cdots$ \\
\hline
$\rho_0$ &
$\cdots$ &
\cellcolor{lightred}$E_n$ &
\cellcolor{lightred}$E_n$ &
\cellcolor{lightblue}$W_0^{5n}$ &
\cellcolor{lightblue}$W_0^{3n}$ &
\cellcolor{lightblue}$W_0^{n}$ &
$0$ & $0$\\
\hline
$\rho_1$ &
$\cdots$ &
\cellcolor{lightred}$E_n$ &
\cellcolor{lightred}$E_n$ &
\cellcolor{lightblue}$W_1^{5n}$ &
\cellcolor{lightblue}$W_1^{3n}$ &
\cellcolor{lightblue}$W_1^{n}$ &
$0$ & $0$\\
\hline
$\rho_2$ &
$\cdots$ &
\cellcolor{lightred}$E_n$ &
\cellcolor{lightblue}$W_2^{4n}$ &
\cellcolor{lightblue}$W_2^{2n}$ &
$0$ &
$0$ &
$0$ &$0$ \\
\hline
\end{tabular}
\caption{The Klyachko filtrations for the $3$-aCM bundle of rank $6n$.}
\label{d3-klyachko-filtrations}
\end{table}
\endgroup

\begin{proof}
For a character $m\in M\cong \ZZ^{\oplus 2}$, let $A_i:=E^{\rho_i}_m$ be the filtration subspace selected at the ray $\rho_i$. The dimension of $\mathrm{H}^1(\Ee_n)_m$ is the distributivity defect $\Delta(A_0,A_1,A_2)$. By Remark \ref{Delta-property}, it is enough to consider triples of proper nonzero filtration stages. For subspaces in general position whose dimensions are $(na_0,na_1,na_2)$, homogeneity of the general-position defect gives
\[
\Delta(A_0,A_1,A_2)=\eta_{6n}(na_0,na_1,na_2)=n\eta_6(a_0,a_1,a_2).
\]
Thus it is enough to perform the calculation for $n=1$. The dimension-to-index correspondence for the prescribed filtrations in the case $n=1$ is as follows. 
\[
\begin{array}{c|c|ccc|}
\multirow{2}{*}{(i)  $~~\rho_0$, $\rho_1$}
 &\text{dimension}&1&3&5\\ \cline{2-5}
 &\text{index}&0&-1&-2
\end{array}
\qquad
\begin{array}{c|c|cc|}
\multirow{2}{*}{(ii)  $~~\rho_2$}
 &\text{dimension}&2&4\\ \cline{2-4}
 &\text{index}&-2&-3
\end{array}
\]
There are in total $18$ dimension triples, and a direct substitution
in the formula for $\eta_6$ gives the following table.
\[
\begin{array}{c|c|c||c|c|c}
(a_0,a_1,a_2)&\text{index sum}&\eta_6&
(a_0,a_1,a_2)&\text{index sum}&\eta_6\\ \hline
(1,1,2)&-2&0 &(3,1,2)&-3&0\\
(1,1,4)&-3&0 &(3,1,4)&-4&1\\
(1,3,2)&-3&0 &(3,3,2)&-4&2\\
(1,3,4)&-4&1 &(3,3,4)&-5&2\\
(1,5,2)&-4&1 &(3,5,2)&-5&1\\
(1,5,4)&-5&1 &(3,5,4)&-6&0\\
(5,1,2)&-4&1 &(5,1,4)&-5&1\\
(5,3,2)&-5&1 &(5,3,4)&-6&0\\
(5,5,2)&-6&0 &(5,5,4)&-7&0
\end{array}
\]
In particular, we have $\eta_6(a_0,a_1,a_2)>0$ only if the index sum is $4$ or $5$, neither of which is divisible by $3$. Since a weight $m$ can contrivute to $\mathrm{H}^1(\Ee_n(3t))$ only when the sum of its three filtration indices is congruent to $0$ modulo $3$, the bundle $\Ee_n$ is $3$-aCM. 
\end{proof}

\begin{remark}\label{quiver-presentation}
Use the \emph{increasing} reindexing
\[
F_i(j):=E^{\rho_i}(-j)
\]
of the Klyachko decreasing filtration to obtain the star quiver $Q_3$ with three arms of lengths $3,3,2$ and dimension vector $n\alpha_3$ for 
\[
\alpha_3= \bigl(6;\,1,3,5;\,1,3,5;\,2,4\bigr).
\]
Thus a representation $\Rr$ of dimension $n\alpha_3$ consists of vector spaces of these dimensions and linear maps along the arrows.
\end{remark}

Now let us deal with the case $d\ge 4$. Let $Q_d$ be the star quiver with three arms of length $3$ and dimension vector
\[
\alpha_{d}=\bigl(4;\,1,2,3;\,1,2,3;\,1,2,3\bigr).
\]
On every arm of $Q_d$ we place the stages of dimensions $n,2n,3n$ at the
increasing indices
\[
\varepsilon_d,\qquad \varepsilon_d+1,\qquad \varepsilon_d+2,
\]
where $\varepsilon_d$ is defined to be $1$ for $d=4$, or $0$ for $d \geq 5$. In the usual decreasing Klyachko convention, the same data are shown in Table~\ref{dge4-klyachko-filtrations} for a fixed vector space $E_n$ of dimension $4n$.

\begin{lemma}\label{dge4-filtration-acm}
Let $E_n$ be a vector space of dimension $4n$. Choose three flags
\[
W_i^n\subset W_i^{2n}\subset W_i^{3n}\subset E_n,
\]
for each $i=0,1,2$ in simultaneous general position with $\dim W_i^k=k$. Equip them with the decreasing filtration indices displayed in Table~\ref{dge4-klyachko-filtrations}. Then these filtrations define a toric vector bundle $\Ee_{n,d}$ of rank $4n$ on $\PP^2$, and
\[
\mathrm{H}^1\bigl(\Ee_{n,d}(dt)\bigr)=0 \qquad\text{for every }t\in\ZZ.
\]
In particular, $\Ee_{n,d}$ is $d$-aCM.
\end{lemma}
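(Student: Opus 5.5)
The plan follows the proof of Lemma~\ref{d3-filtration-acm}. First, Remark~\ref{compability} says that any two filtrations are simultaneously splittable. So the Klyachko compatibility condition (C) of Theorem~\ref{kly} holds on every two-dimensional cone, and the three filtrations of Table~\ref{dge4-klyachko-filtrations} define a toric vector bundle $\Ee_{n,d}$ of rank $4n$. By Corollary~\ref{dacm}, it then suffices to show that
\[
\Delta\bigl(E^{\rho_0}(j_0),E^{\rho_1}(j_1),E^{\rho_2}(j_2)\bigr)>0
\]
only when $d\nmid j_0+j_1+j_2$. By Remark~\ref{Delta-property}, stages equal to $0$ or $E_n$ contribute nothing, so only triples of proper nonzero stages $(W_0^{na_0},W_1^{na_1},W_2^{na_2})$ with $a_i\in\{1,2,3\}$ need to be checked. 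Since the three flags are in simultaneous general position, the defect equals $\eta_{4n}(na_0,na_1,na_2)=n\,\eta_4(a_0,a_1,a_2)$ by homogeneity. This reduces everything to the case $n=1$, where there are $27$ triples.

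Next, record the index bookkeeping. Under the increasing reindexing $F_i(j)=E^{\rho_i}(-j)$, the stage of dimension $a$ sits at decreasing index $-(\varepsilon_d+a-1)$. A triple $(a_0,a_1,a_2)$ with $s:=a_0+a_1+a_2$ therefore has index sum $-(3\varepsilon_d+s-3)$. Divisibility by $d$ then depends only on $s$, which cuts the casework considerably.

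Then compute $\eta_4$ via formula~\eqref{defect-dimension} in general position:
\[
\Delta=\max\bigl(0,a_0+\min(4,a_1+a_2)-4\bigr)+\max(0,s-8)-\max(0,a_0+a_1-4)-\max(0,a_0+a_2-4).
\]
Split into the cases $a_1+a_2\ge4$, where $A_1+A_2=E$, and $a_1+a_2\le 3$. A short enumeration should show that $\eta_4>0$ happens only for $s\in\{5,6,7\}$. For example:
\begin{itemize}
\item $(3,1,1)$, $(2,1,2)$ and $(1,2,2)$ give $1$, $1$ and $1$, with $s=5$;
\item $(2,2,2)$ gives $2$, with $s=6$;
\item $(3,2,2)$ and $(3,1,3)$ give $1$, with $s=7$.
\end{itemize}
In the other direction, every triple with $s\le4$ or $s\ge8$ gives $0$; for instance $(2,3,3)$, $(3,2,3)$ and $(3,3,3)$ all vanish.

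Finally, conclude by cases on $d$. The positive-defect index sums have absolute value in $\{3\varepsilon_d+2,3\varepsilon_d+3,3\varepsilon_d+4\}$.
\begin{itemize}
\item For $d\ge5$ we have $\varepsilon_d=0$, giving $\{2,3,4\}$, none of which is divisible by $d$.
\item For $d=4$ we have $\varepsilon_d=1$, giving $\{5,6,7\}$, none of which is divisible by $4$. This is exactly why the shift $\varepsilon_4=1$ is needed: with $\varepsilon=0$ the value $4$ would occur.
\end{itemize}
The main obstacle is getting the $27$-case table of $\eta_4$ right, and in particular confirming that no triple with $s=4$ or $s=8$ has positive defect. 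I would organize the enumeration by the value of $a_0$ and use the symmetry $a_1\leftrightarrow a_2$. A secondary check is that the sign convention of Table~\ref{dge4-klyachko-filtrations} matches the increasing-index description, but only the absolute value of the index sum matters modulo $d$.
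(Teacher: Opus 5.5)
Your proposal is correct and follows essentially the same route as the paper. You reduce to $n=1$ by homogeneity of the general-position defect, verify that $\eta_4(a_0,a_1,a_2)>0$ exactly when $a_0+a_1+a_2\in\{5,6,7\}$ (index sums $-3\varepsilon_d-2,\,-3\varepsilon_d-3,\,-3\varepsilon_d-4$), and conclude by the cases $\varepsilon_4=1$ and $\varepsilon_d=0$ for $d\ge5$. The only cosmetic difference is that the paper enumerates up to arbitrary permutation: your expression obtained from \eqref{defect-dimension} is in fact fully symmetric in $(a_0,a_1,a_2)$, not merely in $a_1\leftrightarrow a_2$, so the $27$-case check shrinks to ten unordered triples.
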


\begingroup
\renewcommand{\arraystretch}{1.5}
\begin{table}[ht]
\centering
\begin{tabular}{
    >{\centering\arraybackslash}m{1.25cm} |
    >{\centering\arraybackslash}m{1.35cm} |
    >{\centering\arraybackslash}m{1.55cm} |
    >{\centering\arraybackslash}m{1.55cm} |
    >{\centering\arraybackslash}m{1.55cm} |
    >{\centering\arraybackslash}m{1.55cm} |
    >{\centering\arraybackslash}m{1.55cm} |
    >{\centering\arraybackslash}m{1.55cm} 
}
\hline
 & $\cdots$ & $-\varepsilon_d-3$ & $-\varepsilon_d-2$
 & $-\varepsilon_d-1$ & $-\varepsilon_d$
 & $1-\varepsilon_d $ & $\cdots$ \\
\hline
$\rho_0$ &
$\cdots$ &
\cellcolor{lightred}$E_n$ &
\cellcolor{lightblue}$W_0^{3n}$ &
\cellcolor{lightblue}$W_0^{2n}$ &
\cellcolor{lightblue}$W_0^{n}$ &
$0$ & $0$\\
\hline
$\rho_1$ &
$\cdots$ &
\cellcolor{lightred}$E_n$ &
\cellcolor{lightblue}$W_1^{3n}$ &
\cellcolor{lightblue}$W_1^{2n}$ &
\cellcolor{lightblue}$W_1^{n}$ &
$0$ & $0$\\
\hline
$\rho_2$ &
$\cdots$ &
\cellcolor{lightred}$E_n$ &
\cellcolor{lightblue}$W_2^{3n}$ &
\cellcolor{lightblue}$W_2^{2n}$ &
\cellcolor{lightblue}$W_2^{n}$ &
$0$& $0$ \\
\hline
\end{tabular}
\caption{The decreasing Klyachko filtrations for the $d$-aCM bundle of rank $4n$.}
\label{dge4-klyachko-filtrations}
\end{table}
\endgroup

\begin{proof}
The proof goes in exactly same way as in the proof of Lemma \ref{d3-filtration-acm}. For subspaces in general position whose dimensions are $(na_0,na_1,na_2)$ with $a_i \in \{1,2,3\}$, homogeneity of the general-position defect gives
\[
\Delta(A_0,A_1,A_2)=\eta_{4n}(na_0,na_1,na_2)=n\eta_4(a_0,a_1,a_2).
\]
It is therefore enough to calculate in rank four, i.e. $n=1$. Since the value of $\eta_4(a_0,a_1,a_2)$ is symmetric in $a_0,a_1,a_2$, the complete calculation, written up to permutation, is as follows. 
\[
\begin{array}{c|c|c|c}
a_0+a_1+a_2&
\text{unordered triples }(a_0,a_1,a_2)&
\eta_4(a_0,a_1,a_2)&
\text{index sum}\\ \hline
5&(3,1,1),\ (2,2,1)&1&-3\varepsilon_d-2\\
6&(3,2,1)&1&-3\varepsilon_d-3\\
6&(2,2,2)&2&-3\varepsilon_d-3\\
7&(3,3,1),\ (3,2,2)&1&-3\varepsilon_d-4\\
\hline 
\star&\text{all other cases} &0&\text{unnecessary}
\end{array}
\]
In particular, we have $\eta_4(a,b,c)>0$ if and only if $a_0+a_1+a_2\in \{5,6,7\}$. If $d=4$, then $\varepsilon_d=1$, and the only positive-defect index sums are $\{-5,-6,-7\}$, none of which are divisible by $4$. If $d\geq5$, then $\varepsilon_d=0$, and the only positive-defect index sums are $\{-2,-3,-4\}$, again none of which are divisible by $d$. This implies that in every case when $\eta_{4n}(na_0,na_1,na_2)>0$ the corresponding index sum is nonzero modulo $d$. Thus we obtain $\mathrm{H}^1\bigl(\Ee_{n,d}(dt)\bigr)=0$ for every $t\in \ZZ$.
\end{proof}

Now for $d\ge 3$, write $Q_d$ for the relevant quiver and $\alpha_d$ for its dimension vector in Lemmas \ref{d3-filtration-acm} and \ref{dge4-filtration-acm}. 

\begin{lemma}\label{Schur-root}
For each $d\ge 3$ and any positive integer $n$, $n\alpha_d$ is a Schur root for $Q_d$. Furthermore, there exists a triple of flags of the dimension $n\alpha_d$ whose only common flag-preserving endomorphisms are scalar maps.
\end{lemma}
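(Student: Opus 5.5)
The plan is to reduce everything to the case $n=1$ and then apply Theorem~\ref{schofield-homogeneity}. Concretely, we will show that $\alpha_d$ itself is an imaginary non-isotropic Schur root of $Q_d$. Homogeneity then gives $\mathrm{can}(n\alpha_d)=n\alpha_d$, so $n\alpha_d$ is a Schur root for every $n\ge 1$. We treat $d=3$ (arms $3,3,2$, $\alpha_3=(6;1,3,5;1,3,5;2,4)$) and $d\ge4$ (arms $3,3,3$, $\alpha_d=(4;1,2,3;1,2,3;1,2,3)$) in parallel. All arrows are oriented from the tips of the arms towards the central vertex, as in the increasing reindexing of Remark~\ref{quiver-presentation}.

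\textbf{Step 1: the Tits form.} Using Lemma~\ref{euler-hom-ext}, compute $q_{Q_d}(\alpha_d)$ directly.
\begin{itemize}
\item For $d=3$, the vertex sum is $36+35+35+20=126$. The arrow sum is $48+48+32=128$, where each $(1,3,5)$ arm gives $3+15+30$ and the $(2,4)$ arm gives $8+24$.
\item For $d\ge4$, the vertex sum is $16+3\cdot 14=58$ and the arrow sum is $3\cdot 20=60$.
\end{itemize}
Hence $q_{Q_d}(\alpha_d)=-2<0$ in both cases. So once $\alpha_d$ is known to be a Schur root, it is non-isotropic imaginary.

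\textbf{Step 2: Schurness of $\alpha_d$.} This is the step I expect to be the main obstacle. I would use Kac's theory of the fundamental set. Let $(\ ,\ )$ be the symmetrized form.
\begin{itemize}
\item For $d\ge4$ every vertex satisfies $(\alpha_d,e_v)\le 0$. The values are $8-12=-4$ at the centre, and $0$, $0$, $-1$ along each arm, reading from the centre outwards. The support is connected, so $\alpha_d$ lies in the fundamental set.
\item For $d=3$ the vertex of dimension $5$ on each long arm has $(\alpha_3,e_v)=1>0$. Reflecting there gives $(1,3,4)$. The vertex of dimension $3$ then has value $1$, and reflecting it gives $(1,2,4)$. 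This produces $\alpha_3'=(6;1,2,4;1,2,4;2,4)$, whose values are $0$ at the centre and $\le 0$ everywhere else, so $\alpha_3'$ lies in the fundamental set.
\end{itemize}
Since the form is negative and neither vector is a multiple of an isotropic root, Kac's theorem says that a general representation of a non-isotropic root in the fundamental set has trivial endomorphism ring. For $d=3$ I would transport this back to $\alpha_3$ using BGP reflection functors. These require the reflected vertices to be sinks or sources. On a tree we may first reorient the quiver, since the representation varieties of different orientations of a tree are related by reflection functors that preserve generic Schurness away from simple summands. The simple summands are excluded because the general representation has injective arms.

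If this transport is too delicate, there is a fallback. I would check Schofield's criterion $\langle\beta,\alpha_d\rangle-\langle\alpha_d,\beta\rangle>0$ for the finitely many possible dimension vectors $\beta$ of subrepresentations of a general representation. In our setting these are dimension vectors of subspaces in Schubert position relative to three general flags, and they can be listed by hand.

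\textbf{Step 3: from quiver representations to flags.} For any $n\ge 1$, the Schur locus in $\Rep(Q_d,n\alpha_d)$ is a nonempty open set. The locus where all arrow maps are injective is also nonempty and open, and the variety is irreducible, so the two loci meet. A representation in the intersection is exactly a triple of flags in $E_n\cong\CC^{\dim}$ of the prescribed dimensions. A quiver endomorphism of such a representation is determined by its component at the central vertex, because the arrow maps are injective. That component is precisely an endomorphism of $E_n$ preserving every flag stage. Hence the common flag-preserving endomorphisms are exactly $\End_{Q_d}=\CC$, i.e.\ scalars. Finally, a general point of this intersection also lies in the general-position locus used in Lemmas~\ref{d3-filtration-acm} and~\ref{dge4-filtration-acm}, so such flags define the $d$-aCM bundles constructed there.
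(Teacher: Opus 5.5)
\paragraph{The gap: Step~2 for $d=3$.} Your Step~2 has a genuine gap for $d=3$. You correctly show that $\alpha_3'=(6;1,2,4;1,2,4;2,4)$ lies in the fundamental set, so it is Schur for every orientation. But you never get from there to $\alpha_3$ \emph{for the inward orientation} $Q_3$, which is what Step~3 needs in order to produce flags.

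BGP functors only relate $(Q,\beta)$ to $(\sigma_vQ,s_v\beta)$ when $v$ is a sink or a source. The vertices you reflect at (dimensions $5$ and $3$ on the long arms) are neither in $Q_3$. The only admissible vertices of $Q_3$ are the centre and the tips, where $(\alpha_3,e_v)$ is $-2$ (centre), $-1$ (long tips) and $0$ (short tip). So no admissible reflection of $(Q_3,\alpha_3)$ makes $\alpha_3$ smaller, and you would have to exhibit an explicit, necessarily non-monotone, admissible sequence ending at $(Q_3,\alpha_3)$.

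Your substitute is to reorient the tree, on the grounds that reflection functors ``preserve generic Schurness away from simple summands''. This is false: Schurness depends on the orientation even for imaginary non-isotropic roots. Take the star with six arms of length one and $\beta=(4;1,1,1,1,1,1)=s_c(2;1,\dots,1)$, so $q(\beta)=-2$.
\begin{itemize}
\item With all arrows pointing inward, $\beta$ is Schur: six general lines in $\CC^4$ have only scalar common stabiliser.
\item After reversing three arms, $\langle e_c,\beta\rangle=\langle\beta,e_c\rangle=4-3=1$. So the simple $S_c$ is both a subrepresentation and a quotient of every representation of dimension $\beta$ (generically even a direct summand), and $\beta$ is not Schur.
\end{itemize}
The simple summands you dismiss via ``injective arms'' are exactly what can appear in the intermediate orientations, where injectivity of the arms has no meaning.

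\paragraph{How to close it.} You must either actually carry out your fallback (Schofield's criterion over all $\beta\hookrightarrow\alpha_3$, which you only announce), or do what the paper does and exhibit one explicit triple of flags in $\CC^6$ with only scalar common endomorphisms. The paper's construction runs as follows.
\begin{itemize}
\item Two opposite flags of type $(1,3,5)$ force $D=\mathrm{diag}(a,P,Q,f)$ on $\langle e_1\rangle\oplus\langle e_2,e_3\rangle\oplus\langle e_4,e_5\rangle\oplus\langle e_6\rangle$.
\item $W_2^2=\langle e_1+e_2+e_4,\,e_3+e_5+e_6\rangle$ then forces $P=Q=\mathrm{diag}(a,f)$.
\item $W_2^4=W_2^2+\langle e_1+e_3,\,e_2+e_5\rangle$ meets each eigenspace of $D$ only in a line, which forces $a=f$.
\end{itemize}
Since the Schur locus is open, this single example suffices. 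Your Step~1, Schofield homogeneity and your Step~3 then finish exactly as in the paper.

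\paragraph{The case $d\ge4$.} Here your fundamental-set argument is a legitimate alternative to the paper's explicit example (standard flag, opposite flag, and a third flag through $e_1+e_2+e_3+e_4$). However, your pairings are miscomputed: $(\alpha_d,e_c)=8-9=-1$ and all arm values are $0$. The check $\sum_v\alpha_d(v)(\alpha_d,e_v)=2q(\alpha_d)=-4$ confirms this. The sign conclusion, and hence membership in the fundamental set, is unaffected.
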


\begin{proof}
First assume that $d=3$. Let $E_1=\CC^6$ with basis $e_1,\ldots,e_6$, and take the first two flags to be
\[
\begin{aligned}
W_0^1&=\langle e_1\rangle,&
W_0^3&=\langle e_1,e_2,e_3\rangle,&
W_0^5&=\langle e_1,\ldots,e_5\rangle,\\
W_1^1&=\langle e_6\rangle,&
W_1^3&=\langle e_4,e_5,e_6\rangle,&
W_1^5&=\langle e_2,\ldots,e_6\rangle.
\end{aligned}
\]
An endomorphism preserving both opposite flags is block diagonal $D$ for the decomposition
\[
E_1=\langle e_1\rangle\oplus\langle e_2,e_3\rangle\oplus\langle e_4,e_5\rangle\oplus\langle e_6\rangle.
\]
Write it as $D=\mathrm{diag}(a,P,Q,f)$. For the third flag, set
\[
\begin{aligned}
W_2^2&=\langle u,v\rangle,&\text{ with } u=e_1+e_2+e_4, ~~v=e_3+e_5+e_6,\\
W_2^4&=\langle u,v,w_1,w_2\rangle, & \text{ with }w_1=e_1+e_3,~~ w_2=e_2+e_5.
\end{aligned}
\]
The condition $D(W_2^2)\subseteq W_2^2$ first gives
\[
 P=\begin{pmatrix}a&0\\0&f\end{pmatrix},
 \qquad
 Q=\begin{pmatrix}a&0\\0&f\end{pmatrix}.
\]
Directly from the four displayed generators, we have 
\[
W_2^4\cap E_a=\langle u\rangle,\qquad W_2^4\cap E_f=\langle v\rangle.
\]
If $a\ne f$ and $W_2^4$ were $D$-stable, the two spectral projections of $D$ would preserve $W_2^4$, so that
\[
W_2^4=(W_2^4\cap E_a)\oplus(W_2^4\cap E_f),
\]
which is impossible due to dimension counting. Hence $a=f$ and $D=aI_6$. 

Now assume that $n\ge 2$. From the Tits form value $q_{Q_3}(\alpha_3)=-2<0$, one can get that $\alpha_3$ is an imaginary non-isotropic Schur root. Applying Theorem~\ref{schofield-homogeneity} to $\alpha_3$, we obtain
\[
\mathrm{can}(n\alpha_3)=n\alpha_3,
\]
where the right-hand side is the single vector $n\alpha_3$, not $n$ separate copies of $\alpha_3$. Thus we get that $n\alpha_3$ is a Schur root for every $n\ge 1$. Equivalently, there exists a representation
\[
R_{3,n}\in\Rep(Q_3,n\alpha_3) \qquad\text{such that}\qquad \End_{Q_3} (R_{3,n})=\CC.
\]
In particular, the Schur locus 
\[
\Ss_n=\left\{R\in\Rep(Q_3,n\alpha_3)\mid\End_{Q_3}(R)=\CC\right\}
\]
is a nonempty open Zariski subset of irreducible affine space $\Rep(Q_3,n\alpha_3)$, because it contains $R_{3,n}$ and the dimension of the solution space of the linear equations defining $\End_{Q_3}(R)$ is upper semicontinuous in $R$. If we set 
\[
\Ii_n=\left\{R\in\Rep(Q_3,n\alpha_3)
   \mid\text{every arrow map of }R\text{ is injective}\right\},
\]
then it is also a nonempty open Zariski subset. Thus one can select 
\[
R_{3,n} \in \Ss_n\cap\Ii_n\neq\emptyset.
\]

Next, we assume that $d\ge 4$. One can set $E_1=\CC^4$ with basis $e_1,e_2,e_3,e_4$, and take the first flag to be the standard complete flag and the second to be the opposite complete flag:
\[
W_0^j=\langle e_1,\ldots,e_j\rangle, \qquad  W_1^j=\langle e_4,e_3,\ldots,e_{5-j}\rangle,
\]
for $j=1,2,3$. Then an endomorphism preserving both flags is diagonal, i.e., $D=\mathrm{diag}(\lambda_1,\lambda_2,\lambda_3,\lambda_4)$. For the third flag, begin with $W_2^1=\langle e_1+e_2+e_3+e_4\rangle$ and extend it arbitrarily to a complete flag $W_2^1\subset W_2^2\subset W_3^3$. Preservation of $W_2^1$ forces $\lambda_1=\lambda_2=\lambda_3=\lambda_4$, and so the three flags have only scalar common endomorphisms, proving that $\alpha_{d}$ is a Schur root. Since we have $q_{Q_{d}}(\alpha_{d})=-2$, this Schur root is again imaginary non-isotropic. Theorem~\ref{schofield-homogeneity} therefore implies that $n\alpha_{\geq4}$ is a Schur root for every $n\geq1$, and one can apply the previous argument again to obtain a triple $R_{d,n}$ of flags. 
\end{proof}

\begin{corollary}\label{geom-toric-wild}
The Veronese surface $\bigl( \PP^2, \Oo_{\PP^2}(d) \bigr)$ is geometrically toric wild for $d\ge 3$. 
\end{corollary}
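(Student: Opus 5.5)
Fix $d\ge 3$ and $n\ge 1$. We need an algebraic family $\Ee\to\PP^2\times S$ of pairwise non-equivalent indecomposable toric $d$-aCM bundles with $\dim S$ unbounded as $n\to\infty$. The plan has four steps.

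\emph{Step 1: the open parameter locus.} Take $U_n\subseteq\Rep(Q_d,n\alpha_d)$ to be the intersection $\Ss_n\cap\Ii_n\cap\mathcal G_n$. Here $\Ss_n$ and $\Ii_n$ are as in Lemma~\ref{Schur-root}, and $\mathcal G_n$ is the open locus where the three flags are in simultaneous general position, in the sense that every triple of stages attains the generic intersection and sum dimensions. This intersection is nonempty, since each of the three sets is nonempty open in an irreducible affine space. Every point of $U_n$ gives, via Remark~\ref{quiver-presentation}, a triple of flags, and by Lemma~\ref{d3-filtration-acm} or Lemma~\ref{dge4-filtration-acm} a toric bundle $\Ee_R$ that is $d$-aCM. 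Note that the aCM verification there only used general position, i.e.\ the formula $\eta$, which holds on $\mathcal G_n$.

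\emph{Step 2: the universal family.} The tautological flags over $U_n$ are subbundles of the trivial bundle $E_n\otimes\Oo_{U_n}$. Klyachko's construction works in families, so these define a $T$-equivariant bundle on $\PP^2\times U_n$ whose fibres are the $\Ee_R$. Each $\Ee_R$ is indecomposable: by Klyachko's equivalence (Theorem~\ref{kly}), equivariant endomorphisms of $\Ee_R$ are the common flag-preserving endomorphisms, hence scalars, so $\Ee_R$ is equivariantly simple. For indecomposability of the underlying bundle, I would argue as follows. By Krull–Schmidt, a decomposition of the underlying bundle forces the torus to permute a finite set of summands, so since $T$ is connected an equivariant splitting exists; this contradicts simplicity.

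\emph{Step 3: passing to a slice and checking non-equivalence.} Isomorphism classes of $\Ee_R$ correspond to $GL(n\alpha_d)$-orbits, and the stabilizer is $\CC^*$. Schurian orbits therefore have dimension $\dim GL-1$, and the orbit space has dimension
\[
1-q_{Q_d}(n\alpha_d)=1+2n^2 .
\]
Choose a locally closed slice $S\subseteq U_n$ of this dimension meeting orbits in finitely many points; such a slice exists by Rosenlicht's theorem applied to a generic geometric quotient. Shrink $S$ so that the meeting is injective. It remains to handle $\sim_H$. Twisting by $\Oo(qd)$ changes $c_1$ unless $q=0$, while rank and $c_1$ are constant in the family. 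A character twist $\CC_\chi$ shifts the filtration indices by $\langle\chi,u_i\rangle$. I would show that the index patterns in Tables~\ref{d3-klyachko-filtrations} and \ref{dge4-klyachko-filtrations} are fixed only by $\chi=0$: the positions of the jumps are fixed, and the shifts on $\rho_0,\rho_1$ determine $\chi$. Hence $\Ee_s\sim_H\Ee_{s'}$ forces $\Ee_s\cong_T\Ee_{s'}$, and this forces $s=s'$.

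\emph{Step 4: conclusion, and the expected difficulties.} Letting $n\to\infty$ gives $\dim S=2n^2+1\to\infty$. The main obstacle is the slice construction together with rigorously excluding the character and polarization identifications. The indecomposability of the non-equivariant underlying bundle is a secondary delicate point, since equivariant simplicity alone does not immediately give it.
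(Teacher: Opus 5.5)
Your argument is the paper's argument in quiver language. The paper also works on the open locus of flag triples where general position and $\End_T(\Ee_x)=\CC$ hold, though it uses $\mathrm{Fl}(n,3n,5n;6n)^2\times\mathrm{Fl}(2n,4n;6n)$, resp.\ $\mathrm{Fl}(n,2n,3n;4n)^3$, instead of $\Rep(Q_d,n\alpha_d)$. It gets equivariant indecomposability from the absence of idempotents, and computes $\dim(\Uu_{d,n}/\mathrm{PGL})=2n^2+1$, which is your $1-q_{Q_d}(n\alpha_d)$. The paper's proof ends with that count. Your Steps 2--3 address what the definition actually requires: indecomposability of the \emph{underlying} bundle, a parameter space injecting into classes, and exclusion of $\sim_H$. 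Your jump-position/$c_1$ argument for $\sim_H$ is correct, and your family over $\PP^2\times U_n$ is locally free because pairwise relative positions are constant on $\mathcal G_n$. Two of the added steps need repair, however.

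First, \emph{shrink $S$ so that the meeting is injective} fails as stated. If $S\to Y$ (to a Rosenlicht quotient) is generically finite of degree $k\ge2$, every dense open subset of $S$ still contains all $k$ points of a general fibre. Nothing in your construction forces $k=1$: for a divisible dimension vector such as $n\alpha_d$ with $n\ge2$, a rational section of the quotient need not exist. A fix costing one dimension is to replace $S$ by a general hypersurface section $S'$ from a very ample system on a compactification. Pairs $s_1\ne s_2$ in a common fibre form a set of dimension $\le\dim S$, and containing both is a codimension-two condition on the hypersurface. So the points of $S'$ sharing a fibre with another point of $S'$ form a subset of dimension $\le\dim S-2<\dim S'$. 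Hence $S'\to Y$ is birational onto its image and injective on a dense open subset, of dimension $2n^2\to\infty$.

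Second, the Krull--Schmidt/connectedness sentence is only a heuristic. The set of isomorphism classes of summands is finite, but a $T$-stable decomposition does not follow from connectedness alone. Here is a clean replacement. $T$ acts rationally on the finite-dimensional algebra $A=\End(\Ee_R)$, giving an $M$-grading $A=\bigoplus_{m}A_m$ with $A_0=\End_T(\Ee_R)=\CC$. Homogeneous elements of nonzero degree are nilpotent, so $A_mA_{-m}=0$ for $m\ne0$, since a nonzero scalar product would make a nilpotent element invertible. Thus $I=\bigoplus_{m\ne0}A_m$ is a two-sided ideal with $A/I\cong\CC$. It is nilpotent: in a nonzero product of homogeneous elements of $I$, no consecutive block can have degree $0$, since it would lie in $I\cap A_0=0$. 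So the partial degree sums are distinct elements of the finite support of $A$. Hence $A$ is local and the underlying bundle is indecomposable.
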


\begin{proof}
We first deal with the case $d=3$. For each $n\geq1$, consider the irreducible parameter space
\[
\Xx_{3,n}=\mathrm{Fl}(n,3n,5n;6n)^2\times\mathrm{Fl}(2n,4n;6n).
\]
A point of $\Xx_{3,n}$ is precisely a triple of flags of the type used in Lemma~\ref{d3-filtration-acm}.  General position is a nonempty Zariski-open condition, so the lemma gives a nonempty open subset
\[
\Uu_{3,n}^{\mathrm{aCM}}\subseteq\Xx_{3,n}
\]
whose points define toric $3$-aCM bundles of rank $6n$. For a point $x\in \Xx_{3,n}$, the equivariant endomorphism algebra of the associated toric bundle $\Ee_x$ is identified with
\[
\End_T(\Ee_x) =\left\{A\in\End_{\CC}(E_n)\mid A(W_i^a)\subseteq W_i^a
       \text{ for every $i$ and $a$ }\right\}.
\]
Define a map 
\[
\Phi_x \colon \End_{\CC}(E_n)\longrightarrow \bigoplus_{i,a}\Hom_{\CC}(W_i^a,E_n/W_i^a), 
\]
by $A \mapsto \bigl(\pi_{i,a}\circ A_{|W_i^a}\bigr)_{i,a}$, then its kernel is exactly $\End_T(\Ee_x)$. In local coordinates on the flag varieties, $\Phi_x$ is represented by a matrix whose entries are regular functions of $x$ so that the function $x\mapsto\dim\ker\Phi_x$ is upper semicontinuous. Since $\CC I_{E_n}\subseteq\ker\Phi_x$ for every $x$, its smallest possible dimension is one. Thus the subset
\[
\Uu_{3,n}^{\mathrm{Sch}}:=\left\{x\in\Xx_{3,n}\mid\ker\Phi_x=\CC I_{E_n}\right\}
\]
is nonempty and Zariski open by Lemma \ref{Schur-root}. Now we set $\Uu_{3,n}=\Uu_{3,n}^{\mathrm{aCM}}\cap\Uu_{3,n}^{\mathrm{Sch}}$ the intersection of two nonempty open Zariski subsets. For $x\in\mathcal U_{3,n}$ we have $\End_T(\Ee_x)=\CC$. If $\Ee_x$ had a nontrivial decomposition in the equivariant category, projection onto one summand would be an idempotent in $\End_T(\Ee_x)$ different from $0$ and $\mathrm{id}$. Therefore $\Ee_x$ is equivariantly indecomposable.

Note that a partial flag with block sizes $b_1,\ldots,b_s$ has dimension
\[
\sum_{p<q}b_pb_q,
\]
from which we obtain $\dim\Xx_{3,n}=13n^2+13n^2+12n^2=38n^2$. The group $\operatorname{PGL}_{6n}$ acts by simultaneous change of basis. On $\mathcal U_n$ the stabilizer in $\operatorname{PGL}_{6n}$ is trivial, because the common endomorphism algebra before dividing by scalars is $\CC$. Thus we have
\[
\dim(\Uu_{3,n}/\mathrm{PGL}_{6n})=38n^2-\bigl((6n)^2-1\bigr)=2n^2+1,
\]
proving the geometric toric wildness for $d=3$. 

For $d\ge 4$, one sets the irreducible parameter space
\[
\Xx_{d,n}=\mathrm{Fl}(n,2n,3n;4n)^3.
\]
Then one can similarly define a nonempty Zariski open subset $\Uu_{d,n}\subset \Xx_{d,n}$, whose points correspond to indecomposable $d$-aCM bundles of rank $4n$. Finally the assertion follows from
\[
\dim(\Uu_{d,n}/\mathrm{PGL}_{4n})=18n^2-\bigl((4n)^2-1\bigr)=2n^2+1. \qedhere
\] 
\end{proof}

\begin{remark}
In fact, it is shown in Theorem \ref{main2} that the Veronese surface $\bigl( \PP^2, \Oo_{\PP^2}(d)\bigr)$ for $d \ge 3$ is toric wild. Although the geometric toric wildness is automatically implied from the toric wildness, it is still needed to show the existence of a representation $R_{d,1}\in \Rep(Q_d, \alpha_d)$ to prove Theorem \ref{main2}, which was guaranteed from Lemma \ref{Schur-root}. 
\end{remark}

\begin{proposition}\label{d3-stable}
The Schur bundles $\Ee_d$ corresponding to $R_{d,1}$ for $d\ge 3$ in Lemmas \ref{d3-filtration-acm} and \ref{dge4-filtration-acm} with flags in general position, are slope-stable with respect to $\Oo_{\PP^2}(1)$.
\end{proposition}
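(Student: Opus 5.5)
The plan is to apply Klyachko's slope criterion: $\Ee_d$ is slope-stable as soon as
\[
\mu(\Ff)<\mu(\Ee_d)
\]
for every equivariant saturated subsheaf $\Ff$. By Theorem \ref{kly}, such subsheaves correspond to proper nonzero subspaces $0\ne F\subsetneq E$ with the induced filtrations $F\cap E^{\rho_i}(j)$. Every $D_i$ is a line, so $\deg D_i=1$, and
\[
c_1(\Ff)\cdot H=\sum_{i=0}^2\sum_j j\,\dim \mathrm{gr}_j\bigl(F\cap E^{\rho_i}(\bullet)\bigr).
\]
An Abel summation rewrites this inner sum as (lowest index)$\cdot\dim F+\sum_k \dim(F\cap W_i^k)$, the sum running over the proper stages $W_i^k$ of the $i$-th flag. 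Here I use that consecutive stages differ by exactly one in index; in the $d=3$ table the $\rho_2$ stages $W_2^{4n},W_2^{2n}$ sit at $-3,-2$, which is still consecutive. The lowest indices are a common shift, so they cancel from both sides of the inequality. The condition to verify is therefore
\[
\frac{1}{f}\sum_{i,k}\dim(F\cap W_i^k)\;<\;\frac{1}{r}\sum_{i,k}\dim W_i^k,\qquad f=\dim F .
\]
For $d\ge4$ the right side is $18/4=9/2$; for $d=3$ it is $(9+9+6)/6=4$.

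The key input is genericity of the three flags, which I would use through Kleiman transversality on the Grassmannian $G(f,r)$. The subspaces $F$ with prescribed intersection dimensions $(\dim F\cap W_i^k)_k$ with flag $i$ form a Schubert cell $\Omega_i$. For three flags in general position, $\Omega_0\cap\Omega_1\cap\Omega_2$ is nonempty only if
\[
\sum_i \operatorname{codim}\Omega_i\le f(r-f).
\]
So it suffices to prove, for each single (partial) flag and each Schubert datum, an inequality of the form
\[
\sum_k \dim(F\cap W^k)\le c\, f+\lambda\cdot\operatorname{codim}\Omega
\]
with constants chosen so that, after summing over $i$ and using the codimension bound, the total is strictly below $\tfrac{9}{2}f$ (resp. $4f$). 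Failing a uniform linear bound, I would simply enumerate. For $d\ge4$ the dimension is $r=4$, so only $f=1,2,3$ occur and the Schubert symbols are few.

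In the case $f=1$, a line lying in $W_0^a\cap W_1^b\cap W_2^c$ requires $(4-a)+(4-b)+(4-c)\le3$. Its contribution is exactly $\sum_i(4-a_i)\le 3<9/2$. For $f=3$ I would pass to the dual bundle, which reverses the roles via annihilators, or equivalently note that a hyperplane satisfies $\dim(F\cap W^k)\ge k-1$ with equality generically, and bound the excess in the same way. The case $f=2$ is handled by the analogous codimension count on $G(2,4)$, whose dimension is $4$.

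The $d=3$ case has $r=6$, with flags of type $(1,3,5),(1,3,5),(2,4)$, and the same enumeration must run over $f=1,\dots,5$ on $G(f,6)$. I expect this to be the main obstacle. The bookkeeping is larger, and the extremal cases where equality nearly holds, such as $F=W_2^2$, or $F$ of dimension $3$ meeting $W_0^3$ and $W_1^3$, must be checked carefully against the strict bound $4f$. Two reductions should help here. Duality $F\mapsto F^\perp$ pairs $f$ with $6-f$, reducing the work to $f\le3$. The case $f=3$ should then reduce to the codimension budget $9$ on $G(3,6)$ being too small to allow large intersections with all three flags simultaneously.
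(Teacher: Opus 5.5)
Your proposal follows the paper's proof. Klyachko's slope criterion plus Abel summation reduces stability to $w(S)/s<w(E)/r$, where $w(S)=\sum_{i,k}\dim(S\cap W_i^k)$ and the right-hand bound is $4$, resp.\ $9/2$. Kleiman transversality for three general flags then restricts the Schubert positions by $\sum_i\operatorname{codim}\le s(r-s)$, and one maximizes $w$ under that constraint.

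There are two differences in execution.
For $d\ge4$ the paper uses the exact identity $\sum_{p=1}^{3}\dim(S\cap W_i^p)=c_s(J_i)+s(s-1)/2$ for a complete flag. This gives $w(S)\le s(4-s)+3s(s-1)/2<9s/2$ for all $s$ at once. It is exactly the uniform linear bound you were hoping for, with $\lambda=1$, so your case split and duality are unnecessary there.
For $d=3$ the paper enumerates all $s=1,\dots,5$. Your duality $w(S)=8s-24+w^{\perp}(S^{\perp})$ legitimately reduces this to $s\le3$, since the flag types $(1,3,5)$ and $(2,4)$ are self-dual under annihilators.

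However, you stop before doing the $d=3$ check, which is the actual content of the proof. The maxima under the codimension constraint are $3,7,11$ for $s=1,2,3$, and hence $15,19$ for $s=4,5$ by your duality; each equals $4s-1<4s$.

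If you replace the enumeration by a single per-flag bound of the form $\text{contribution}\le\lambda\operatorname{codim}\Omega+\text{const}$, be careful. At $s=3$ with $\lambda=1/2$, the best constants are $3$ for a $(1,3,5)$-flag and $3/2$ for the $(2,4)$-flag. These only give $w(S)\le 9/2+3+3+3/2=12=4s$, which is not strict. With $\lambda=2/3$ the constants become $7/3$ and $1$, giving $w(S)\le 35/3$, hence $w(S)\le 11$; a direct case check also works.
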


\begin{proof}
First assume that $d=3$. From Table~\ref{d3-klyachko-filtrations} one obtains $c_1(\Ee_3)=-36$ and $\mu (\Ee_3)=-6$. For the computational convenience, let us shift the the first two rays by $3$ and the third ray by $4$, by tensoring $\Oo_{\PP^2}(10)$. Let $0\neq S\subsetneq E_1=:E$ be a vector subspace with $\dim S=:s$, and define
\[
\begin{split}
w(S):={}&\sum_{i=0}^1\bigl( \dim(S\cap W_i^1)+\dim(S\cap W_i^3) +\dim(S\cap W_i^5)\bigr)\\
&+\dim(S\cap W_2^2)+\dim(S\cap W_2^4).
\end{split}
\]
Then the Klyachko slope criterion in \cite{DDK20, DDK21} asks us to prove
\begin{equation}\label{Klyachoko-stability}
\frac{w(S)}s<\frac{w(E)}6=4 
\end{equation}
for every proper nonzero $S\subset E$. We now establish \eqref{Klyachoko-stability} by a finite Schubert calculation. Extend the three partial flags to complete flags. Then the Schubert position of an $s$-plane relative to one of them is indexed by $I=\{i_1<\cdots<i_s\}\subset\{1,\ldots,6\}$, and set
\begin{align*}
c_s(I)&:=\sum_{a=1}^s(6-s+a-i_a),\\
A(I)&:=\sum_{p\in\{1,3,5\}}\#\{a:i_a\leq p\},\\
B(I)&:=\sum_{p\in\{2,4\}}\#\{a:i_a\leq p\}.
\end{align*}
Here, $c_s(I)$ is the codimension of the corresponding Schubert variety, while $A(I)$ and $B(I)$ are precisely the contributions to $w(S)$ from a flag of the first and third type, respectively. For three general flags, the three Schubert conditions indexed by $I_0,I_1,I_2$ can have a common solution only if
\begin{equation}\label{common-sol}
c_s(I_0)+c_s(I_1)+c_s(I_2)\leq\dim\mathrm{Gr}(s,6)=s(6-s).
\end{equation}
Under this constraint, direct enumeration of the subsets of
$\{1,\ldots,6\}$ gives the following complete list of maximal
values. 
\[
\begin{array}{c|c|c|c}
s&\max\bigl(A(I_0)+A(I_1)+B(I_2)\bigr)
 &4s&\text{one maximizing }(I_0,I_1,I_2)\\ \hline
1&3&4&(\{1\},\{6\},\{6\})\\
2&7&8&(\{1,3\},\{5,6\},\{4,6\})\\
3&11&12&(\{1,3,5\},\{1,5,6\},\{4,5,6\})\\
4&15&16&(\{1,2,3,5\},\{3,4,5,6\},\{2,4,5,6\})\\
5&19&20&(\{1,2,3,4,5\},\{2,3,4,5,6\},
                 \{2,3,4,5,6\}).
\end{array}
\]
Thus $w(S)\leq4s-1<4s$ for $1\leq s\leq5$, proving \eqref{Klyachoko-stability}. 

Now assume that $d\ge 4$, and twist by $\Oo_{\PP^2}(3\varepsilon_d+3)$ to have all four filtration weights $3,2,1,0$. As in the case $d=3$, we set
\[
w(S):= \sum_{i=0}^2\sum_{p=1}^3\dim(S\cap W_i^p).
\]
for a nonzero proper subspace $S\subset E_1=:E$ of $\dim S=:s$ to have the Klyachko stability criterion $w(S)/s<w(E)/4=9/2$. We verify the inequality uniformly for general flags. Relative to one complete flag, a Schubert position in $\mathrm{Gr}(s,4)$ is indexed by $J=\{j_1<\cdots<j_s\}\subset\{1,2,3,4\}$. Set
\[
 c_s(J_i):=\sum_{a=1}^s(4-s+a-j_{i,a}).
\]
for the index sets $J_i=\{j_{i,1}<\cdots<j_{i,s}\}\subset\{1,2,3,4\}$ for each $i=0,1,2$. This is the codimension of the Schubert variety determined by $J_i$. Moreover, we have
\[
\sum_{p=1}^3\dim(S\cap W_i^p)=\sum_{a=1}^s(4-j_{i,a})=c_s(J_i)+\frac{s(s-1)}2.
\]
For three general flags, the three Schubert conditions can have a
common solution only if
\[
c_s(I_0)+c_s(I_1)+c_s(I_2) \leq\dim\mathrm{Gr}(s,4)=s(4-s).
\]
Therefore we have
\[
w(S)\leq s(4-s)+\frac{3s(s-1)}2<\frac{9}{2}s
\]
for $s=1,2,3$.
\end{proof}

%%%%%%%%%%%%%%%%%%%%%%%%%%%%%%%%%%%%%%%%%%%%%

\section{Toric wildness}

\begin{definition}\label{filt}
For a fixed representation $R_d:=R_{d,1}$ in the proof of Lemma \ref{Schur-root}, the category $\mathbf{Filt}_{Q_d}(R_d)$ is defined to be the full subcategory of $\Rep(Q_d)$, consisting of representations $M$ admitting a finite filtration by subrepresentations
\[
0=M_0\subset M_1\subset\cdots\subset M_\ell=M
\]
such that $M_j/M_{j-1}\cong R_d$ for each $j$. Equivalently, it is the smallest full extension-closed subcategory containing $R_d$.
\end{definition}

\begin{lemma}\label{extension-closure}
Let $\Ee_d$ be the toric $d$-aCM vector bundle corresponding to the flag representation $R_d$. Every object $M\in\Filt_{Q_d}(R_d)$ is a flag representation and it corresponds to a toric $d$-aCM vector bundle $\Ee_M$. In fact, $\Ee_M$ is an iterated equivariant self-extension of $\Ee_d$.  
\end{lemma}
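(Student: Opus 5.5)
We argue by induction on the length $\ell$ of a filtration $0=M_0\subset\cdots\subset M_\ell=M$ with subquotients isomorphic to $R_d$. The statement has three parts, each inherited by extensions: (i) $M$ is a flag representation, i.e.\ every arrow map of $M$ is injective; (ii) $M$ determines a toric bundle $\Ee_M$ through Klyachko filtrations indexed as in Tables~\ref{d3-klyachko-filtrations} and \ref{dge4-klyachko-filtrations}; (iii) $\Ee_M$ is $d$-aCM. For $\ell=1$ all three follow from $M\cong R_d$, which lies in $\Ii_1$, together with Lemmas~\ref{d3-filtration-acm} and \ref{dge4-filtration-acm}.

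For (i), take a short exact sequence of representations $0\to M_{\ell-1}\to M\to R_d\to 0$ and look at one arrow $a\colon v\to w$. The vertexwise maps give a morphism of short exact sequences of vector spaces, from $0\to M_{\ell-1}(v)\to M(v)\to R_d(v)\to0$ to the corresponding sequence at $w$. The outer vertical maps are injective: the left one by induction, the right one since $R_d\in\Ii_1$. The snake lemma then shows the middle map $M(a)$ is injective. Hence every arm of $M$ is a chain of subspaces of the central space $M(c)$. Reindexing by $E^{\rho_i}(-j)=F_i(j)$ as in Remark~\ref{quiver-presentation}, and setting the stage equal to $M(c)$ below the arm and $0$ above it, gives three full decreasing filtrations of $M(c)$. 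By Remark~\ref{compability} and Theorem~\ref{kly}, these define a toric vector bundle $\Ee_M$ of rank $\ell\cdot\dim R_d(c)$.

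Next we check that the extension of representations becomes an equivariant extension of bundles. Injectivity at every vertex shows that the stages satisfy $M_{\ell-1}(c)\cap F^M_i(j)=F^{M_{\ell-1}}_i(j)$. The image of $F^M_i(j)$ in $R_d(c)=M(c)/M_{\ell-1}(c)$ is $F^{R_d}_i(j)$, since the sequence is exact at each arm vertex. So $0\to M_{\ell-1}(c)\to M(c)\to R_d(c)\to0$ is strictly compatible with all three filtrations, i.e.\ exact on every filtration stage. By Klyachko's equivalence this is a short exact sequence of toric bundles $0\to\Ee_{M_{\ell-1}}\to\Ee_M\to\Ee_d\to0$. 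I expect this step to be the main technical point: one must verify strictness, not only that the maps are morphisms of filtered spaces, so that the sequence of reflexive sheaves is exact with locally free middle term. A cleaner alternative, if needed, is to check exactness on each affine chart $U_\sigma$, using the $\sigma$-families built from the filtrations.

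Finally, for (iii), twist the sequence by $\Oo_{\PP^2}(dt)$ and take the long exact cohomology sequence. This gives $\mathrm{H}^1(\Ee_{M_{\ell-1}}(dt))\to\mathrm{H}^1(\Ee_M(dt))\to\mathrm{H}^1(\Ee_d(dt))$. The left term vanishes by induction and the right term by Lemma~\ref{d3-filtration-acm} or \ref{dge4-filtration-acm}, so the middle vanishes for all $t$. Thus $\Ee_M$ is $d$-aCM. Unwinding the induction shows that $\Ee_M$ is an iterated equivariant self-extension of $\Ee_d$.
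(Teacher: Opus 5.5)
Your outline matches the paper's proof: the snake lemma gives injectivity, the vertexwise exact sequences give an equivariant extension, and the long exact sequence in cohomology gives the $d$-aCM property. Parts (i) and (iii) are fine. However, the step you singled out is a genuine gap, and strictness does not close it. The paper's proof passes over the same point.

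\textbf{Why strictness is not enough.} Exactness of $0\to\Ee_{M_{\ell-1}}\to\Ee_M\to\Ee_d\to0$ on the chart $U_\sigma$, $\sigma=\operatorname{cone}(u_i,u_j)$, means that $F^M_i(a)\cap F^M_j(b)\to F^{R_d}_i(a)\cap F^{R_d}_j(b)$ is onto for all $a,b$. Write $A=M_{\ell-1}(c)$, $B_i=F^M_i(a)$, $B_j=F^M_j(b)$. A diagram chase using only strictness ($A\cap B_i=F^{M_{\ell-1}}_i(a)$, and the image of $B_i$ equals $F^{R_d}_i(a)$) identifies the cokernel of this map with
\[
\frac{A\cap(B_i+B_j)}{(A\cap B_i)+(A\cap B_j)}.
\]
So its dimension is $\Delta(A,B_i,B_j)$, which has no reason to vanish.

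Here is a concrete failure. On $E=\CC^2$ let $E^{\rho_0}(1)=\langle e_1\rangle$ and $E^{\rho_1}(1)=\langle e_2\rangle$, with all other stages equal to $E$ or $0$, and let $A=\langle e_1+e_2\rangle$. Every filtration is strictly exact on $0\to A\to E\to E/A\to0$, and all arrows are injective. Yet at indices $(1,1)$ on $\operatorname{cone}(u_0,u_1)$ the source intersection is $0$ while the target is $E/A$. Geometrically this is $0\to\Oo\to\Oo(D_0)\oplus\Oo(D_1)\to\Oo(D_0+D_1)$, which is not surjective at $p_2$. (Indeed $\Ext^1(\Oo(2),\Oo)=0$, while $\Oo(1)^{\oplus2}\not\cong\Oo\oplus\Oo(2)$.) Your argument uses nothing about $R_d$ beyond injectivity of its arrows, so it cannot rule this out.

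\textbf{How to fix it.} The missing ingredient is the general position of the flags of $R_d$, which you should carry through the induction. Add to the inductive hypothesis that for all $i\neq j$ and all $a,b$, the stages $F_i(a),F_j(b)$ of $M_{\ell-1}$ meet in the expected dimension $\max\bigl(0,\dim F_i(a)+\dim F_j(b)-\dim M_{\ell-1}(c)\bigr)$.

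For $R_d$ this holds because its flags are in general position, which Lemmas~\ref{d3-filtration-acm} and \ref{dge4-filtration-acm} require anyway. All dimensions for $M_{\ell-1}$ are $(\ell-1)$ times those for $R_d$, so the same case split applies to both.

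If $\dim F^{R_d}_i(a)+\dim F^{R_d}_j(b)\le\dim R_d(c)$, the target intersection is $0$, so surjectivity is trivial. Moreover $B_i\cap B_j$ maps to $0$, hence lies in $A\cap B_i\cap B_j=F^{M_{\ell-1}}_i(a)\cap F^{M_{\ell-1}}_j(b)=0$.

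Otherwise $F^{R_d}_i(a)+F^{R_d}_j(b)=R_d(c)$ and $(A\cap B_i)+(A\cap B_j)=A$. Then $B_i+B_j\supseteq A$ maps onto $R_d(c)$, so $B_i+B_j=M(c)$ and the defect is $\dim A/A=0$.

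In both cases the sequence is exact on $U_\sigma$, and $M$ again has pairwise generic flags. So the induction closes, and your step (iii) goes through as written.
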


\begin{proof}
For a short exact sequence in $\Rep(Q_d)$
\[
0\to A \xrightarrow{\iota} B \xrightarrow{\pi} C \to 0
\]
and for each arrow $a:u\rightarrow v$ of $Q_d$, there is a commutative diagram
\[
\begin{tikzcd}[column sep=3.1em]
0 \arrow[r] &
A(u) \arrow[r,"\iota_u"] \arrow[d,"A(a)"'] &
B(u) \arrow[r,"\pi_u"] \arrow[d,"B(a)"'] &
C(u) \arrow[r] \arrow[d,"C(a)"'] &
0\\
0 \arrow[r] &
A(v) \arrow[r,"\iota_v"] &
B(v) \arrow[r,"\pi_v"] &
C(v) \arrow[r] &
0 .
\end{tikzcd}
\]
If $A(a)$ and $C(a)$ are injective, then $B(a)$ is also injective by the Snake lemma. In each step of $R_d$-filtration of $M$, one has a short
exact sequence
\begin{equation}\label{iteration}
0\to M_{j-1}\to M_j\to R_d\cong M_1\to 0.
\end{equation}
Since the representation $R_d$ has injective maps along every arm of $Q_d$, one can conclude that each $M_j$ is a flag representation by applying the previous argument inductively. Setting $E_j=M_j (v_{\infty})$ for the central vertex $v_{\infty}\in Q$, each $M_j$ corresponds to a toric bundle $\Ee_j$ whose associated Klyachko filtrations are $\{E_j^{\rho_i}(t)\}_{t\in\mathbb Z}$ with subspaces of $E_j$. At every vertex on each arm, we have the corresponding exact sequence
\[
0\to E_{j-1}^{\rho_i}(t)\to E_j^{\rho_i}(t)\to E_{1}^{\rho_i}(t)\to 0,
\]
yielding an exact sequence of $T$-equivariant vector bundles 
\begin{equation}\label{iteration-bundle}
0\to \Ee_{M_{j-1}}\to \Ee_{M_j} \to \Ee_d\cong \Ee_{M_1} \to 0.
\end{equation}
Since the $d$-aCM condition is closed under such extensions, each $\Ee_{M_j}$ is $d$-aCM by induction and so is $\Ee_M$.
\end{proof}

\begin{theorem}\label{embedding}
Let $Q=(Q_0, Q_1)$ be a finite acyclic quiver and let $R\in\Rep(Q)$ satisfy
\[
\End_Q(R)=\CC, \qquad \dim_\CC \Ext_Q^1(R,R)\geq3.
\]
Then there is a $\CC$-linear exact faithful functor
\[
\Phi_R: \mathrm{mod}\CC\langle x,y\rangle \longrightarrow \Filt_Q(R)
\]
which preserves indecomposable objects and reflects isomorphism classes.
\end{theorem}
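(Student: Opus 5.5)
The plan is the standard construction of a wild embedding from a brick with large self-extension space. Since $Q$ is acyclic, $\Rep(Q)$ is hereditary and we can work with explicit cocycles. Using the standard projective resolution, write $\Ext^1_Q(R,R)$ as the cokernel of
\[
\delta:\bigoplus_{v\in Q_0}\Hom(R(v),R(v))\to\bigoplus_{a:v\to w}\Hom(R(v),R(w)),\qquad (\phi_v)\mapsto (\phi_wR(a)-R(a)\phi_v)_a .
\]
Choose cocycles $\xi_1,\xi_2,\xi_3\in\bigoplus_a\Hom(R(v),R(w))$ whose classes are linearly independent. For a finite-dimensional module $(V,X,Y)$, with $V=\CC^m$, $X$ the action of $x$ and $Y$ the action of $y$, we define $\Phi_R(V)$ to be the representation with $\Phi_R(V)(v)=R(v)\otimes V$ at each vertex $v$ and arrow maps
\[
R(a)\otimes I_m+\xi_1(a)\otimes N+\xi_2(a)\otimes X+\xi_3(a)\otimes Y,
\]
where $N$ is a fixed nilpotent part forcing a filtration. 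Concretely, I would rather replace $V$ by the module over $\CC\langle x,y\rangle$ attached to the "triangular" trick: put $\Phi(V)(v)=R(v)\otimes(V\oplus V)$ with block upper-triangular arrows
\[
\begin{pmatrix}R(a)\otimes I & \xi_1(a)\otimes I+\xi_2(a)\otimes X+\xi_3(a)\otimes Y\\ 0 & R(a)\otimes I\end{pmatrix}.
\]
This is the classical construction (Ringel, Drozd) and makes the functor visibly exact and $\CC$-linear: on morphisms $f:V\to V'$ commuting with $X,Y$, send $f$ to $\id\otimes(f\oplus f)$. Exactness is immediate since everything is a tensor product with vector spaces. The block-triangular form gives a filtration $0\subset R\otimes V\subset\Phi(V)$ whose two layers are $R^{\oplus m}$, each filtered by copies of $R$, so $\Phi(V)\in\Filt_Q(R)$.

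The core step is full control of $\Hom_Q(\Phi(V),\Phi(V'))$. A morphism is a family of maps $R(v)\otimes(V\oplus V)\to R(v)\otimes(V'\oplus V')$. First I would show it must preserve the filtration: the composite $R\otimes V\to\Phi(V)\to\Phi(V')\to R\otimes V'$ (top to quotient) is a morphism $R^m\to R^{m'}$, hence by $\End_Q(R)=\CC$ a scalar matrix $\id_R\otimes g$. Then I would analyze the diagonal blocks, each again of the form $\id_R\otimes g_i$, and the off-diagonal block $h$. The commutation condition in the top-right corner reads
\[
\delta(h)=\sum_k \xi_k\otimes(\text{linear combination involving } g_1,g_2,X,Y,X',Y'),
\]
and passing to $\Ext^1$, linear independence of the classes $[\xi_k]$ forces the coefficients to vanish. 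These vanishing conditions should yield $g_1=g_2=:g$, $gX=X'g$, $gY=Y'g$, and, after handling the lower-left block via the same argument, that the lower-left block is zero. Hence $\Hom(\Phi V,\Phi V')\cong\Hom_{\CC\langle x,y\rangle}(V,V')\oplus(\text{radical part } h\text{ with }\delta(h)=0\text{-type terms})$. The radical part consists of nilpotent maps mapping the top into the bottom. Consequently, $\End(\Phi V)/\mathrm{rad}\cong\End(V)/\mathrm{rad}$. From this, $\Phi$ is faithful, $\Phi(V)$ is indecomposable when $V$ is (its endomorphism ring is local), and an isomorphism $\Phi V\cong\Phi V'$ induces an invertible $g$ intertwining the actions, giving $V\cong V'$.

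I expect the main obstacle to be the bookkeeping in the previous paragraph. The difficulty is showing that the lower-left block vanishes and that the top-left composite really is a scalar tensor. The first thing I would try is to bound $\Hom_Q(R,R\otimes U)$ by $\End(R)\otimes U$, using $\End_Q(R)=\CC$. Then I would exploit that $\Hom$ from the top layer to the bottom layer vanishes modulo the filtration, because the extension is nonsplit. The third independent class $\xi_1$ is exactly what rules out the degenerate case in which $X$ or $Y$ could be absorbed into a coboundary after a change of basis.
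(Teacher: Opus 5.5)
Your proposal is correct and follows essentially the same route as the paper. It uses the same block upper-triangular functor built from three independent cocycles. The lower-left block $\id_R\otimes g$ of any morphism is killed because the class $[\xi_1]\otimes g+[\xi_2]\otimes X'g+[\xi_3]\otimes Y'g$ must vanish, and the upper-right equation then gives $g_1=g_2$ and the intertwining relations. What remains is a square-zero ideal of maps from quotient to sub, exactly as in the paper.

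The differences are cosmetic. You write explicit block/cocycle equations and conclude indecomposability from locality of $\End(\Phi V)$, where the paper uses pullbacks/pushouts of extension classes and idempotents. One point of order: kill the lower-left block before asserting that the diagonal blocks are of the form $\id_R\otimes g_i$, since they satisfy the morphism condition only up to that term.
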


\begin{proof}
Since $Q$ has no relations, we have $\Ext_Q^1(R,R)\cong \mathrm{coker}(\delta)$, where $\delta$ is the map 
\[
\delta : C^0(R,R)= \bigoplus_{v\in Q_0}\End_\CC(R(v)) \longrightarrow  C^1(R,R) = \bigoplus_{a:v\rightarrow  w\in Q_1} \Hom_\CC(R(v),R(w)),
\]
defined by $\delta\bigl((h_v)_v\bigr)=\bigl(h_wR(a)-R(a)h_v\bigr)_{a~:~v\rightarrow w}$. Choose three linearly independent classes and corresponding cocycle representatives:
\[
\xi_0,\xi_x,\xi_y\in\Ext_Q^1(R,R), \qquad z_0,z_x,z_y\in C^1(R,R).
\]
Let $M=(W;X,Y)$ be a finite-dimensional $\CC\langle x,y\rangle$-module, where $X,Y\in\End_\CC(W)$ are the actions of $x$ and $y$ on a vector space $W$. Then we define a functor $\Phi_R$ by $M \mapsto \Phi_R(M)$ such that
\begin{itemize}
    \item [(i)] $\Phi_R(M)(v)=(R(v)\otimes_\CC W)\oplus(R(v)\otimes_\CC W)$ for $v\in Q_0$; \vspace{.1cm}
    \item [(ii)] $ \Phi_R(M)(a)=\begin{pmatrix}
R(a)\otimes I_W&
z_0(a)\otimes I_W+z_x(a)\otimes X+z_y(a)\otimes Y\\
0&R(a)\otimes I_W
\end{pmatrix}$ for $a\in Q_1$.
\end{itemize}
Since $Q$ has no relations, these arrow matrices define a representation. If $f:M=(W;X,Y)\rightarrow M'=(W';X',Y')$ is a module homomorphism, then we obtain
\begin{equation}\label{inter-eq}
fX=X'f,\qquad fY=Y'f.
\end{equation}
If we define
\[
\Phi_R(f)(v)=\begin{pmatrix}I_{R(v)}\otimes f&0\\0&I_{R(v)}\otimes f \end{pmatrix},
\]
then \eqref{inter-eq} shows directly that these block matrices commute with every arrow map, and so one can obtain a $\CC$-linear functor
\[
\Phi_R : \mathrm{mod}\CC\langle x,y\rangle \longrightarrow \Rep(Q).
\]
It is faithful because $I_{R(v)}\otimes f=0$ for every $v$ forces $f=0$ whenever $R\ne0$. There also exists a natural exact sequence
\begin{equation}
0\to  R\otimes_\CC W \xrightarrow{i_M}\Phi_R(M) \xrightarrow{p_M}R\otimes_\CC W \to 0,
\end{equation}
which corresponds to the class $\eta_M=\xi_0\otimes I_W+\xi_x\otimes X+\xi_y\otimes Y$. Set $n=\dim_\CC W$ and choose a complete flag $0=W_0\subset W_1\subset\cdots\subset W_n=W$ to have the filtration for the representation $R \otimes_{\CC}W$:
\[
0=R\otimes W_0 \subset R\otimes W_1\subset\cdots\subset R\otimes W_n=R\otimes W
\]
with each successive quotients isomorphic to $R$. Set
\[
A=\mathrm{Im}(i_M)=\ker (p_M)\subset\Phi_R(M)
\]
be the distinguished subrepresentation and define
\[
A_i=i_M(R\otimes W_i), \qquad B_j=p_M^{-1}(R\otimes W_j)
\]
for $0\le i,j\le n$. Then we obtain
\[
0=A_0\subset A_1\subset\cdots\subset A_n=B_0 \subset B_1\subset\cdots\subset B_n=\Phi_R(M)
\]
with each successive quotient isomorphic to $R$. Thus by induction on $n$ we can obtain that
\[
\Phi_R(M)\in\Filt_Q(R).
\]
Since $\Filt_Q(R)$ is a full subcategory of $\Rep(Q)$, the quiver morphism $\Phi_R(f)$ constructed above is automatically a morphism in $\Filt_Q(R)$. Therefore one can obtain a functor
\[
\mathrm{mod}\CC \langle x,y\rangle \xrightarrow{\ \Phi_R\ } \Filt_Q(R) \hookrightarrow \Rep(Q).
\]
By the standard argument one can also show that $\Phi_R$ is exact. Indeed, a short exact sequence of finite-dimensional $\CC\langle x,y\rangle$-modules gives an exact sequence of the underlying vector spaces. At each vertex $v\in Q_0$, the functor $\Phi_R$ is the direct sum of two copies of the exact functor $R(v)\otimes_\CC -$. Its image is consequently exact at every vertex. Exactness in $\Rep(Q)$ is computed vertexwise, so the image sequence is exact in $\Rep(Q)$. Since $\Filt_Q(R)$ is extension-closed, it carries the exact structure inherited from $\Rep(Q)$. Hence the same sequence is exact in $\Filt_Q(R)$.

Hence it remains to show that $\Phi_R$ preserves indecomposable objects and reflects isomorphism classes, for which we observe the following assertion. 

\begin{claim}\label{inverse}
Any morphism $\psi:\Phi_R(M)\rightarrow \Phi_R(M')$ for $M=(W;X,Y)$ and $M'=(W';X',Y')$, induces a module homomorphism $f: M \rightarrow M'$. 
\end{claim}
\begin{claimproof}
The composite
\[
g=p_{M'}\circ\psi \circ i_M: R\otimes_\CC W\longrightarrow R\otimes_\CC W'
\]
is a morphism in $\Rep(Q)$. Note that there is a unique linear map $g_0:W\rightarrow W'$ such that $g=I_R\otimes g_0$, because $\End_Q(R)=\CC$. Since the morphism $\psi \circ i_M$ is a lift of $g$ through $p_{M'}$, i.e., $p_{M'}(\psi \circ i_M)=g$, the pullback of the target extension corresponding to $\eta_{M'}$ along $g$ split so that we have
\[
g^* \eta_{M'}=\xi_0\otimes g_0+\xi_x\otimes X'g_0+\xi_y\otimes Y'g_0=0.
\]
Because $\xi_0,\xi_x,\xi_y$ are linearly independent, we have $g_0=0$ by comparing their coefficients. Thus we have $p_{M'}\circ \psi \circ i_M=0$ and so one can obtain the restriction map
\[
\psi_{\mathrm{res}} : R\otimes_\CC W \longrightarrow R\otimes_{\CC} W'.
\]
This also yields a map on quotients
\[
\overline{\psi} : R\otimes_\CC W \cong \frac{\Phi_R(M)}{i_M(R\otimes_\CC W)} \longrightarrow  R\otimes_\CC W' \cong \frac{\Phi_R(M')}{i_{M'}(R\otimes_\CC W')}.
\]
Both maps have the forms $I_R\otimes f_0$ and $I_R\otimes f_1$, respectively for uniquely determined linear maps $f_0,f_1:W\rightarrow  W'$. Compatibility for a morphism of extensions gives
\[
(f_0)_*\eta_M=(f_1)^*\eta_{M'}.
\]
In terms of the three independent extension classes, this becomes
\[
\xi_0\otimes f_0+\xi_x\otimes f_0X+\xi_y\otimes f_0Y=\xi_0\otimes f_1+\xi_x\otimes X'f_1+\xi_y\otimes Y'f_1.
\]
Comparison of coefficients gives
\[
 f_0=f_1=:f,\qquad fX=X'f,\qquad fY=Y'f.
\]
In particular, $f:M\rightarrow M'$ is a $\CC\langle x,y\rangle$-module homomorphism. 
\end{claimproof}

\noindent The difference $\psi-\Phi_R(f)$ induces a map
\[
\frac{\Phi_R(M)}{i_M(R\otimes_\CC W)} \longrightarrow R\otimes_{\CC}W' \subset \Phi_R(M')
\]
so that we have $\psi-\Phi_R(f)=i_{M'}\circ (I_R\otimes h)\circ p_M=:k_h$ for some $h\in\Hom_\CC(W,W')$. Then the map $k_h$ corresponds to the strictly upper-triangular block matrix
\[
k_h(v)=\begin{pmatrix}0&I_{R(v)}\otimes h\\0&0 \end{pmatrix},
\]
and so $\psi$ has a unique decomposition as $\Phi_R(f)\oplus k_h$. 
Assume that another morphism $\varphi: \Phi_R(M')\rightarrow \Phi_R(M'')$ induces a module homomorphism $g: M' \rightarrow M''$ with $k_{\ell}=\varphi-\Phi_R(g)$. Then we have 
\begin{equation}\label{square-free}
k_\ell\circ k_h =\left( i_{M''}\circ(I_R\otimes\ell)\circ p_{M'}\right)\circ \left(i_{M'}\circ (I_R\otimes h)\circ p_M \right) =0,
\end{equation}
because of $p_{M'}\circ i_{M'}=0$. One can also observe that 
\begin{equation}\label{mixed}
\Phi_R(g)k_h=k_{gh}, \qquad k_\ell\Phi_R(f)=k_{\ell f}
\end{equation}
from the natural identity
\[
\Phi_R(g)i_{M'}=i_{M''}(I_R\otimes g), \qquad p_{M'}\Phi_R(f)=(I_R\otimes f)p_M.
\]

Finally, suppose that $\psi:\Phi_R(M)\xrightarrow{~~\cong~~}\Phi_R(M')$. Applying Claim \ref{inverse} and the argument right in the above both to $\psi$ and to $\varphi=\psi^{-1}$, we have
\[
 \begin{aligned}
 I_{\Phi_R(M)}=\psi^{-1}\psi
 &=\bigl(\Phi_R(g)+k_\ell\bigr)
   \bigl(\Phi_R(f)+k_h\bigr)\\
 &=\Phi_R(gf)+k_{gh}+k_{\ell f}+k_\ell k_h\\
 &=\Phi_R(gf)+k_{gh+\ell f},
 \end{aligned}
\]
and the uniqueness of the decomposition gives $gf=I_W$. Similarly, we have $fg=I_{W'}$ so that we have $f: M\xrightarrow{\cong} M'$. Since the converse is immediate, one can conclude that $\Phi_R$ reflects isomorphism classses. 

Assume that $M$ is indecomposable, and let $e\in\End_Q(\Phi_R(M))$ be an idempotent. By Claim \ref{inverse} and the previous argument, it induces an idempotent module endomorphism $f\in\End_{\CC \langle x,y\rangle}(M)$. A finite-dimensional indecomposable module has no nontrivial idempotent endomorphism, so we have either $f=0$ or $f=\id$. If $f=0$, then $e=k_h$ for some $h$, and this gives $e=e^2=k_h\circ k_h=0$. If $f=\id$, apply the same argument to the idempotent $\id -e$, whose induced module endomorphism is zero. This gives $\id-e=0$, and so $\Phi_R(M)$ has no nontrivial idempotent endomorphism and is indecomposable.
\end{proof}

\begin{theorem}\label{main2}
The Veronese surface $\bigl(\PP^2,\Oo_{\PP^2}(d)\bigr)$ for $d\ge 3$ is toric wild.
\end{theorem}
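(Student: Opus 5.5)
The plan is to compose the representation embedding of Theorem~\ref{embedding} with the functor from $\Filt_{Q_d}(R_d)$ to toric $d$-aCM bundles given by Lemma~\ref{extension-closure}. Then I check that the composite still preserves indecomposables and reflects equivalence modulo $\sim_H$ for $H=\Oo_{\PP^2}(d)$.

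\textbf{Step 1: the hypotheses of Theorem~\ref{embedding}.} Take $R_d=R_{d,1}$ from Lemma~\ref{Schur-root}, chosen in the open locus where the flags are in general position, so that Lemmas~\ref{d3-filtration-acm} and \ref{dge4-filtration-acm} apply, and where all arrow maps are injective. Then $\End_{Q_d}(R_d)=\CC$. By Lemma~\ref{euler-hom-ext} and $q_{Q_d}(\alpha_d)=-2$, we get $\dim\Ext^1_{Q_d}(R_d,R_d)=1-(-2)=3$. The value $-2$ is a direct computation: for $\alpha_3$ it is $36+2(35)+20-2(6\cdot 5+5\cdot 3+3\cdot 1)-(6\cdot 4+4\cdot 2)=126-96-32=-2$, and similarly for $\alpha_d$ with $d\ge4$. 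Hence Theorem~\ref{embedding} gives an exact, faithful, indecomposable-preserving and isomorphism-reflecting functor $\Phi_{R_d}\colon\mathrm{mod}\,\CC\langle x,y\rangle\to\Filt_{Q_d}(R_d)$.

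\textbf{Step 2: passing to bundles.} Klyachko's equivalence (Theorem~\ref{kly}), restricted to flag representations with the fixed index assignment of Table~\ref{d3-klyachko-filtrations} or Table~\ref{dge4-klyachko-filtrations}, is fully faithful and exact. Compatibility on each cone is automatic by Remark~\ref{compability}. Equivariant morphisms are exactly the linear maps of the central spaces that preserve all filtration stages, which are exactly morphisms of the star quiver representations, because the arm maps are injective. By Lemma~\ref{extension-closure}, each $\Phi(M):=\Ee_{\Phi_{R_d}(M)}$ is a toric $d$-aCM bundle. So $\Phi$ is an exact $\CC$-linear functor into $\aCM_T(\PP^2,\Oo(d))$ that preserves indecomposability and isomorphism classes. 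Indecomposability in the equivariant category follows from the absence of idempotents, as in Corollary~\ref{geom-toric-wild}. I would read ``indecomposable'' equivariantly, as in that corollary.

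\textbf{Step 3: reflecting $\sim_H$ (the main obstacle).} Suppose $\Phi(N)\cong_T\Phi(M)\otimes\Oo(qd)\otimes\CC_\chi$, where $\dim M=m$ and $\dim N=m'$. Comparing ranks gives $\mathrm{rk}\,\Ee_d\cdot m=\mathrm{rk}\,\Ee_d\cdot m'$, so $m=m'$. Comparing first Chern classes gives $c_1(\Phi(N))=m\,c_1(\Ee_d)=c_1(\Phi(M))+\mathrm{rk}\cdot qd$, which forces $q=0$. A twist by $\Oo(qd)\otimes\CC_\chi$ shifts the three index sequences of the Klyachko filtrations by an integer vector $(c_0,c_1,c_2)$ with $c_0+c_1+c_2=qd$. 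This is the step where the paper's ``normalized equivariant determinant'' enters. Alternatively, I compare the jump loci directly. For each ray, the set of indices where the filtration jumps is the same for $\Phi(M)$ and $\Phi(N)$: it equals that of $\Ee_d$, because the stages of $\Phi(M)$ at index $j$ have dimension $2m\cdot\dim$ of the $R_d$-stage, which is strictly monotone on the blue range. An equivariant isomorphism must match these jump positions. Therefore each shift $c_i$ is zero, so $\chi$ is trivial and $q=0$.

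Thus $\Phi(N)\cong_T\Phi(M)$. By full faithfulness, $\Phi_{R_d}(N)\cong\Phi_{R_d}(M)$ in $\Rep(Q_d)$, and Theorem~\ref{embedding} then gives $N\cong M$. The converse direction is trivial. The point needing the most care is making the jump-set comparison rigorous: I must confirm that the twisted filtrations are literally the index-translated filtrations, so that matching the finite jump sets, with their dimension profiles, forces zero translation on each ray.
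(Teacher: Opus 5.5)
Your proposal is correct and follows the paper's proof. The common steps are: $q_{Q_d}(\alpha_d)=-2$ gives $\dim\Ext^1_{Q_d}(R_d,R_d)=3$; Theorem~\ref{embedding} composed with Lemma~\ref{extension-closure} lands in $\aCM_T$; and rank and $c_1$ fix $\dim M$ and $q$. The one deviation is that you rule out the character twist by comparing the per-ray jump loci of the Klyachko filtrations, where the paper uses normalized equivariant determinants. Your route is an equally valid and somewhat more explicit way to force $\chi=0$. There is a harmless missing factor $2$: in fact $\mathrm{rk}\,\Phi(M)=2r_dm$ and $c_1(\Phi(M))=2m\,c_1(\Ee_d)$.

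One caution, which applies equally to the paper. Your blanket claim that Klyachko's functor is exact on flag representations is too strong. Vertexwise exactness does not in general give exactness of the bundle sequence at the torus-fixed points, because intersections of stages from two rays need not form exact sequences. It does hold here because the flags of $R_d$ are pairwise transverse: $E^{\rho_i}(a)\cap E^{\rho_j}(b)\neq0$ forces $E^{\rho_i}(a)+E^{\rho_j}(b)=E$, and this property persists through iterated self-extensions.
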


\begin{proof}
Choose $R=R_d$ from Proposition~\ref{Schur-root} and definition \ref{filt}. Since $q_{Q_d}(\alpha_d)=-2$, we have $\dim_{\CC} \Ext_{Q_d}^1(R_d, R_d)=3$ and so Theorem \ref{embedding} gives an exact representation embedding
\[
\Phi_{R_d}: \mathrm{mod}\CC \langle x,y\rangle \longrightarrow \Filt_{Q_d} (R_d).
\]
By Lemma~\ref{extension-closure}, the target consists of toric $d$-aCM vector bundles. Thus the composite
\[
\Phi_d : \mathrm{mod}\CC\langle x,y\rangle\longrightarrow\aCM_T(\PP^2, \Oo_{\PP^2}(d))
\]
is exact, preserves indecomposability, and reflects equivariant isomorphism classes. Recall from Lemma \ref{extension-closure} that $\Ee_d$ is the $d$-aCM toric bundle associated to $R_d$ with $r_d:=\mathrm{rank}(\Ee_d)$. Thus we have $\mathrm{rank}(\Phi_d(M))=2r_d\dim_\CC M$. Consequently, if $\Phi_d(M)\sim_{\Oo_{\PP^2}(d)}\Phi_d(N)$, we obtain
\[
\dim_\CC M=\dim_\CC N=:n.
\]
For fixed $n>0$, all such representations $\Phi_R(M)$ have the same dimension vector $2n\alpha_d$ and the same filtration jump indices. In particular, their underlying bundles have the same first Chern class and their equivariant determinants have the same normalized linearization. Thus we obtain $\Phi_d(N)\cong_T\Phi_d(M)\otimes\CC_\chi$. Taking equivariant determinants gives
\[
\det\Phi_d(N)\cong_T \det\Phi_d(M)\otimes\CC_{\,2nr_d\chi}.
\]
Since the two normalized equivariant determinants are equal, we have $\chi=0$. Thus $\Phi_d(N)\cong_T\Phi_d(M)$, and the isomorphism-reflection property of Theorem~\ref{embedding} yields $N\cong M$. The converse is immediate. Hence the functor reflects equivalence classes modulo $\sim_{\Oo_{\PP^2}(d)}$, exactly as required in the definition.
\end{proof}

\begin{remark}
The strategy in Theorems \ref{embedding} and \ref{main2} follow the one in \cite[Section 3]{FPL}, where they consider representations of $\omega$-Kronecker quiver for $\Ext_X^1(\Bb, \Aa)$ of dimension $\omega\ge 3$ for two simple aCM sheaves $\Aa$ and $\Bb$, to deduce the wildness of $X$. 
\end{remark}

Let $\Ulr_T\bigl(\PP^2, \Oo_{\PP^2}(d) \bigr)$ denote the exact category of $T$-equivariant Ulrich bundles on $\bigl(\PP^2,\Oo_{\PP^2}(d)\bigr)$. We call the polarized surface \emph{toric Ulrich-wild} if there is an exact
representation embedding
\[
\mathrm{mod}\CC\langle x,y\rangle \longrightarrow\Ulr_T(\PP^2,\Oo_{\PP^2}(d))
\]
which preserves indecomposability and reflects the relevant equivariant equivalence classes. It follows directly from the definition that $\Ulr_T\bigl(\PP^2, \Oo_{\PP^2}(d) \bigr)$ is closed under finite direct sums and equivariant extensions.

\begin{lemma}\label{Ulrich}
For the Schur bundles $\Ee_d$ corresponding to $R_{d,1}$ for $d\ge 3$ in Lemmas \ref{d3-filtration-acm} and \ref{dge4-filtration-acm} with flags in general position, set
\[
\Uu_3:=\Ee_3(9), \qquad \Uu_4:=\Ee_4(12).
\]
Then $\Uu_d$ is Ulrich with respect to $\Oo_{\PP^2}(d)$ for $d=3,4$.
\end{lemma}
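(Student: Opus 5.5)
We will use the Eisenbud--Schreyer cohomological characterization of Ulrich bundles. On the surface $\bigl(\PP^2,\Oo_{\PP^2}(d)\bigr)$, a bundle $\Ff$ is Ulrich if and only if
\[
\mathrm{H}^i\bigl(\Ff(-dt)\bigr)=0\qquad\text{for all } i \text{ and } t=1,2.
\]
Apply this to $\Ff=\Uu_d$. For $d=3$ the required twists are $\Uu_3(-3)=\Ee_3(6)$ and $\Uu_3(-6)=\Ee_3(3)$. For $d=4$ they are $\Uu_4(-4)=\Ee_4(8)$ and $\Uu_4(-8)=\Ee_4(4)$. In every case the twist differs from $\Ee_d$ by a multiple of $\Oo_{\PP^2}(d)$. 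So the vanishing of $\mathrm{H}^1$ is immediate from Lemmas \ref{d3-filtration-acm} and \ref{dge4-filtration-acm}, applied through Corollary \ref{dacm}. It remains to kill $\mathrm{H}^0$ and $\mathrm{H}^2$ of these four twists.

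For $\mathrm{H}^0$ and $\mathrm{H}^2$ we will use the slope-stability of $\Ee_d$ from Proposition \ref{d3-stable}. Let $\Gg$ be a stable bundle of rank $r>1$. If $\mu(\Gg)\le 0$, then $\mathrm{H}^0(\Gg)=0$: a nonzero section gives $\Oo\to\Gg$, which contradicts stability when $\mu(\Gg)<0$, and forces an isomorphism with a line bundle when $\mu(\Gg)=0$. By Serre duality,
\[
\mathrm{H}^2(\Gg)\cong\mathrm{H}^0\bigl(\Gg^\vee(-3)\bigr)^\vee,
\]
and $\Gg^\vee(-3)$ is again stable. Hence $\mathrm{H}^2(\Gg)=0$ as soon as $\mu(\Gg)\ge -3$. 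The whole argument therefore reduces to checking that the two relevant twists have slopes in the closed interval $[-3,0]$. We compute $c_1$ from the Klyachko data by the formula
\[
c_1=\sum_i\sum_j j\cdot\dim\operatorname{gr}_j E^{\rho_i},
\]
which reproduces the value $c_1(\Ee_3)=-36$ stated in the proof of Proposition \ref{d3-stable}. This gives $\mu(\Ee_3)=-6$. Hence $\mu(\Ee_3(6))=0$ and $\mu(\Ee_3(3))=-3$, the two endpoints, and both vanishings follow from the borderline cases of the argument above.

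As a consistency check we will also verify $\chi\bigl(\Uu_d(-d)\bigr)=\chi\bigl(\Uu_d(-2d)\bigr)=0$ by Riemann--Roch. For this we compute $c_2(\Ee_d)$ from the equivariant Chern class formula applied to the filtrations of Table \ref{d3-klyachko-filtrations} and Table \ref{dge4-klyachko-filtrations}. The check is redundant once all three cohomology groups vanish, but it guards against sign errors in the twist conventions.

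The main obstacle I expect is the case $d=4$. With $\varepsilon_4=1$, the naive evaluation of the formula above gives $c_1(\Ee_4)=-30$, hence $\mu(\Ee_4)=-15/2$ and $\mu(\Ee_4(8))=1/2$. This lies outside $[-3,0]$, so either my reading of the sign convention for decreasing filtrations (Corollary \ref{dacm} mentions a ``harmless global sign'') is off, or the normalization in Proposition \ref{d3-stable} implicitly shifts the weights. I would first recompute $c_1(\Ee_4)$ carefully against the rank-two conventions of \cite{HH}, and against the twist used in Proposition \ref{d3-stable}, which makes all four weights $3,2,1,0$. If the slope argument still does not land in $[-3,0]$, the fallback is a direct weight-by-weight computation. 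A weight $m$ of $\mathrm{H}^0$ is given by the intersection
\[
E^{\rho_0}(j_0)\cap E^{\rho_1}(j_1)\cap E^{\rho_2}(j_2).
\]
Since the flags are in general position, such an intersection vanishes unless the total codimension is less than $4$. So we would list the finitely many index triples with $j_0+j_1+j_2$ equal to the required total, exactly as in the tables of Section 4, and check that the codimension condition fails for each. We would then do the same for $\mathrm{H}^2$ using the dual filtrations.
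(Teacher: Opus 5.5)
For $d=3$ your argument is the paper's: $\mu(\Uu_3(-3))=0$ and $\mu(\Uu_3(-6))=-3$ are exactly the two endpoints of your interval. Stability together with Serre duality then kills $\mathrm{H}^0$ and $\mathrm{H}^2$ of both twists.

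For $d=4$ the proposal has a genuine gap, and the cause you suspect is a red herring. There is no sign error. The same formula that gives $c_1(\Ee_3)=-36$ gives $c_1(\Ee_4)=-30$. Hence $c_1(\Uu_4)=18$ (this is what the paper's ``$\mu(\Uu_4)=18$'' means), $\mu(\Uu_4)=9/2$, $\mu(\Uu_4(-4))=1/2$ and $\mu(\Uu_4(-8))=-7/2$. The twist in Proposition~\ref{d3-stable} is only a normalization for checking stability and does not affect $\Uu_4$. So your slope criterion genuinely gives only half of what is needed, namely $\mathrm{H}^2(\Uu_4(-4))\cong\Hom(\Uu_4,\Oo_{\PP^2}(1))^\vee=0$ and $\mathrm{H}^0(\Uu_4(-8))=0$.

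The missing idea is the Riemann--Roch computation you treated as a redundant consistency check. One has $\chi(\Uu_4(t))=2(t+4)(t+8)$, so $\chi(\Uu_4(-4))=\chi(\Uu_4(-8))=0$. Since $\mathrm{H}^1$ of both twists vanishes by Lemma~\ref{dge4-filtration-acm}, this gives $h^0=h^2$ for each twist. The half supplied by stability then forces the other half. This is exactly how the paper finishes.

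Your fallback would also close the case if carried out. Use the convention under which $c_1(\Ee_3)=-36$. Then $\mathrm{H}^0(\Ee_4(8))$ is the sum over $j_0+j_1+j_2=-8$ of $\dim\bigl(E^{\rho_0}(j_0)\cap E^{\rho_1}(j_1)\cap E^{\rho_2}(j_2)\bigr)$. The nonzero stages have codimensions $\max(0,4+j_i)$, which add up to at least $4$, so every term vanishes for general flags. Dually, $\mathrm{H}^2(\Ee_4(4))\cong\mathrm{H}^0(\Ee_4^\vee(-7))^\vee$. The dual stages have codimension $\max(0,j-1)$ for $j\le 4$ (and are zero beyond), and index sum $7$ again forces total codimension at least $4$. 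This route bypasses stability altogether. Its cost is the extra open requirement that every triple intersection of flag stages has the expected dimension. As written, however, you commit to neither completion, and your primary line of argument for $d=4$ does not work.
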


\begin{proof}
On $\bigl( \PP^2, \Oo_{\PP^2}(d) \bigr)$, the bundle $\Uu_d$ is Ulrich if and only if
\[
\mathrm{H}^\bullet(\Uu_d(-d)) \cong 0, \qquad \mathrm{H}^\bullet(\Uu_d(-2d)) \cong 0,
\]
where the case $\bullet=1$ is automatic by Lemmas \ref{d3-filtration-acm} and \ref{dge4-filtration-acm}. Consider first $d=3$. From
\[
\mu(\Uu_3(-6))=-3, \qquad \mu(\Uu_3(-3))=0.
\]
Stability gives the vanishing for $\bullet=0$. On the other hand, the Serre duality gives
\[
\begin{aligned}
\mathrm{H}^2(\Uu_3(-6))&\cong \Hom(\Uu_3, \Oo_{\PP^2}(3))^\vee \cong0;\\
\mathrm{H}^2(\Uu_3(-3))&\cong  \Hom(\Uu_3,\Oo_{\PP^2})^\vee \cong 0,
\end{aligned}
\]
because of $\mu(\Uu_3)=3$. Now assume $d=4$ and then we have
\begin{equation}\label{euler}
\chi(\Uu_4(-4))=\chi(\Uu_4(-8))=0
\end{equation}
by the Riemann-Roch. Since $\Uu_4$ is stable, the Serre duality and $\mu(\Uu_4)=18$ gives
\[
\mathrm{H}^2(\Uu_4(-4)) \cong \Hom(\Uu_4, \Oo_{\PP^2}(1))^\vee \cong 0,
\]
and this gives $\mathrm{H}^0(\Uu_4(-4))=0$ by \eqref{euler}. Similarly, stability gives $\mathrm{H}^0(\Uu_4(-8)) \cong 0$ and so we obtain $\mathrm{H}^2(\Uu_4(-8))=0$ by \eqref{euler}. 
\end{proof}

\begin{proof}[Proof of Theorem \ref{ulrich-wild}]
Define
\[
\Psi_d : \mathrm{mod} \CC \langle x,y \rangle \longrightarrow \aCM_T(\PP^2, \Oo_{\PP^2}(d))
\]
by $\Psi_d(M)=\Ee_M\otimes \Oo_{\PP^2}(m_d)$, where $\Ee_M$ is as in Lemma \ref{extension-closure}, $m_3=9$ and $m_4=12$. Indeed, if $M=(W;X,Y)$ with $n:=\dim_\CC W$, then $\Psi_d(M)$ lies in an equivariant exact sequence
\[
0\to \Uu _d\otimes_\CC W \to \Phi_{\Uu_d}(M) \to \Uu_d\otimes_\CC W \to 0
\]
so that it is Ulrich with 
\[
\mathrm{rank}(\Phi_{\mathcal U_d}(M))=2r_dn =
\begin{cases} 12n,&d=3,\\ 8n,&d=4. \end{cases}
\]
Thus $\Psi_d$ factors through $\Ulr_T(\PP^2, \Oo_{\PP^2}(d))$. Then the conditions for the toric Ulrich-wildness is obtained from the conditions for the toric-wildness: Theorem~\ref{embedding} also shows that the functor preserves indecomposability and reflects equivariant isomorphism classes. 
\end{proof}

\begin{conjecture}\label{conj}
For every integer $d>4$, the Veronese surface $\bigl(\PP^2,\Oo_{\PP^2}(d)\bigr)$ is toric Ulrich-wild. 
\end{conjecture}

\begin{remark}
Theorem~\ref{ulrich-wild} proves the analogous statement for $d=3,4$. The bundles $\Uu_3$ and $\Uu_4$ are not Ulrich for $d>4$: its Euler polynomial is
\[
\chi(\Uu_d(t))=\frac{\mathrm{rank}(\Uu_d)}{2}(t+d)(t+2d)
\]
for $d=3,4$ so that two roots differ exactly by $d$. Thus Conjecture~\ref{conj} requires a new equivariant Ulrich bundle to start with, or a different representation embedding, rather than a
formal rescaling of the present construction.
\end{remark}
%%%%%%%%%%%%%%%%%%%%%%%%%%%%%%

\end{document}